\documentclass{amsart}
\usepackage{amsmath, amscd, amssymb, amsthm}
\usepackage{bbm}
\usepackage{latexsym}
\usepackage{amsfonts}
\usepackage{graphicx}
\usepackage[all,cmtip]{xy}
\usepackage[colorlinks,linkcolor=blue,breaklinks=true,urlcolor=blue,citecolor=blue,anchorcolor=blue,pagebackref]{hyperref}%
\usepackage{geometry}
\newtheorem{theorem}{Theorem}
\newtheorem{lemma}{Lemma}
\newtheorem{corollary}[theorem]{Corollary}

\newtheorem{proposition}{Proposition}

\newtheorem{conjecture}{Conjecture}

\newcommand{\R}{{\mathbb R}}

\newcommand{\C}{{\mathbb C}}

\renewcommand*\backref[1]{}
\renewcommand*\backrefalt[4]{ \ifcase #1 \or (cited on page #2) \else (cited on pages #2) \fi}

\newcommand{\be}{\begin{equation}}
\newcommand{\ee}{\end{equation}}
\newcommand{\bea}{\begin{eqnarray}}
\newcommand{\eea}{\end{eqnarray}}

\newcommand{\tr}{\mathrm{tr}}
\newcommand{\im}{\mathrm{Im}}

\DeclareMathOperator{\End}{End}
\DeclareMathOperator{\Hol}{Hol}

\def\XXint#1#2#3{{\setbox0=\hbox{$#1{#2#3}{\int}$ }
\vcenter{\hbox{$#2#3$ }}\kern-.6\wd0}}

\begin{document}

\title[Hermitian connections with parallel torsion]
{Hermitian connections with parallel torsion and constant holomorphic sectional curvature}

\author{Shuwen Chen}
\address{School of Mathematical Sciences, Chongqing Normal University, Chongqing 401331, China}
\email{3153017458@qq.com}

\author{Fangyang Zheng}
\address{School of Mathematical Sciences, Chongqing Normal University, Chongqing 401331, China}
\email{20190045@cqnu.edu.cn; franciszheng@yahoo.com}
\thanks{F. Zheng is the corresponding author. He is partially supported by the National Natural Science Foundation of China under grants No.~12471039 and No.~12141101, and by the 111 Project D21024.}

\subjclass[2020]{53C55 (primary), 53C05 (secondary)}
\keywords{Hermitian connection, parallel torsion, holomorphic sectional curvature, holonomy, Ambrose--Singer connection, Hermitian space forms}

\begin{abstract}
A long-standing conjecture in non-K\"ahler geometry states that a compact Hermitian manifold with constant Chern holomorphic sectional curvature should be K\"ahler when the constant is nonzero and Chern flat when the constant is zero. In this article, we study the corresponding problem for a Hermitian connection $\nabla$ with $\nabla T=0$. We first show that if the $\nabla$-holomorphic sectional curvature is constant, then the $(1,1)$-part of the curvature is $\nabla$-parallel. Our main result states that a nonzero constant forces $T=0$; consequently, the metric is K\"ahler and locally a complex space form. The proof is pointwise and requires neither compactness nor completeness. We also obtain applications to the Chern and Bismut connections.
\end{abstract}

\maketitle

\markleft{Shuwen Chen and Fangyang Zheng}
\markright{Hermitian connections with parallel torsion and constant holomorphic sectional curvature}

\tableofcontents

\section{Introduction}

Let $(M^n,g)$ be a Hermitian manifold. We denote by $\nabla^c$ and
$R^c$ the Chern connection and its curvature tensor, respectively.
For $0\neq X\in T^{1,0}M$, its Chern holomorphic sectional curvature is $H^c(X)=R^c_{X\bar X X\bar X}/|X|^4$. When $g$ is K\"ahler, the K\"ahler curvature symmetries imply that the values of $H^c$ determine the entire curvature tensor. In particular, complete K\"ahler manifolds with constant holomorphic sectional curvature are the classical \emph{complex space forms}, whose simply connected models are $\mathbb{CP}^n$, $\C^n$, and $\mathbb{CH}^n$, equipped with the standard metrics up to a constant rescaling.

For a general Hermitian metric, the Chern curvature tensor does not obey all the K\"ahler symmetries. Thus the values of the holomorphic sectional curvature determine only the symmetrization of $R^c$, instead of the entire curvature tensor. This loss of
curvature symmetry is one of the main difficulties in the study of Hermitian space forms. The following is a long-standing conjecture in non-K\"ahler geometry.

\begin{conjecture}[Constant holomorphic sectional curvature conjecture]\label{conj:Chern-HSC}
Let $(M^n,g)$ be a compact Hermitian manifold whose Chern holomorphic sectional curvature is a constant $c$. If $c\neq0$, then $g$ is K\"ahler; if $c=0$, then $g$ is Chern flat.
\end{conjecture}

By the classical theorem of Boothby \cite{Boothby}, compact Chern-flat manifolds are compact quotients of complex Lie groups equipped with left-invariant Hermitian metrics. Such metrics need not be K\"ahler in complex dimension three or higher.

The conjecture is known in complex dimension two. Balas and
Balas--Gauduchon proved the cases $c\leq0$, while Apostolov,
Davidov, and Muskarov settled the remaining positive case
\cite{Balas,BG,ADM}. In complex dimension at least three, the
general conjecture remains open, although several important special cases have been established. Davidov, Grantcharov, and Muskarov studied the problem on twistor spaces \cite{DGM}. Chen, Chen, and Nie proved the locally conformally K\"ahler case when $c\leq0$, and Huang and Wan subsequently treated the positive case
\cite{CCN,HuangWan}. Tang confirmed the conjecture for Chern
K\"ahler-like metrics \cite{Tang}. Results for balanced threefolds under additional assumptions were obtained by Zhou and Zheng and by Ma and Nie \cite{ZhouZ,MaNie}. Chen and Li proved the conjecture for compact balanced threefolds when $c\leq0$ \cite{ChenLi}. In a recent preprint, Qin and Tian
\cite{QinTian} proved the positive case for compact balanced threefolds. They also showed that every Hermitian metric of constant nonpositive Chern holomorphic sectional curvature on a compact complex manifold in Fujiki's class $\mathcal C$ is K\"ahler. Li and Zheng proved the conjecture for complex nilmanifolds \cite{LZ}, while Rao and Zheng established it for Bismut K\"ahler-like metrics \cite{RZ}.

An important class arising naturally in this problem is that of
\emph{Bismut torsion-parallel} metrics, abbreviated as BTP metrics from now on. Let $\nabla^b$ and $T^b$ denote the Bismut connection \cite{Bismut} and its torsion, respectively. By definition, a Hermitian metric is BTP if $\nabla^bT^b=0$. This class contains several familiar special Hermitian geometries. For instance, every Bismut-flat metric is BTP \cite{WYZ}. Moreover, every Bismut K\"ahler-like metric is BTP \cite{ZhaoZBTP}, while every Vaisman metric is BTP \cite{AndradaVillacampa,ZhaoZBTP}. Here a metric is called \emph{Bismut K\"ahler-like} if the curvature of $\nabla^b$ satisfies all the K\"ahler curvature symmetries, while a \emph{Vaisman metric} is a locally conformally K\"ahler metric whose Lee form is parallel with respect to the Levi--Civita connection. In particular, the BTP condition includes a number of well-studied non-K\"ahler examples.

The constant holomorphic sectional curvature problem has been studied extensively within the BTP class. We proved Conjecture~\ref{conj:Chern-HSC} for non-balanced BTP metrics in all dimensions and, using the classification of Zhao and Zheng, for balanced BTP threefolds \cite{ChenZBTP}. Wang and Zheng subsequently established the conjecture for balanced BTP fourfolds and then for balanced BTP manifolds in all dimensions \cite{WangZFourfold,WangZAll}. An independent proof of the nonzero-constant case for balanced BTP manifolds was obtained by H. Wang \cite{WangHaohao}.

In Conjecture \ref{conj:Chern-HSC}, one can replace the Chern connection by Levi-Civita or Bismut connection, or more generally, by any canonical metric connection
$$ D^t_s = (1-s)D^t + s \nabla^r, \ \ \   \ \ \ D^t=\frac{1+t}{2}\nabla^c + \frac{1-t}{2}\nabla^b,$$
where $t,s\in {\mathbb R}$, $\nabla^r$, $\nabla^c$, $\nabla^b$ are respectively the Levi-Civita, Chern, or Bismut connection. The one parameter family of connections $D^t$ is called the Gauduchon line. In \cite{CN}, Chen and Nie proposed the following:

\begin{conjecture}[Chen-Nie] \label{conj2}
Let $(M^n,g)$ be a compact Hermitian manifold. If for a given $(t,s)$, the holomorphic sectional curvature of $D^t_s$ is a non-zero constant, then $g$ must be K\"ahler.
\end{conjecture}

The situation for zero holomorphic sectional curvature is a little more complicated so we omit the discussion here. In \cite{CN}, they confirmed the conjecture in complex dimension 2.

One could also consider the space form problem for general Hermitian connections on $(M^n,g)$, which form an infinite dimensional affine space containing the Gauduchon line $D^t$. As observed in \cite{WYZ26}, {\em for any $n\geq 2$, there are compact Hermitian manifolds $(M^n,g)$  which admit a Hermitian connection $\nabla$ whose holomorphic sectional curvature is a nonzero constant, yet the metric $g$ is not K\"ahler.} So one cannot extend Conjecture \ref{conj2} to general Hermitian (or metric) connections. In this article, we restrict ourselves to a special type of Hermitian connections, namely, those with parallel torsion. This is a direct generalization to BTP manifolds. We want to know when can the holomorphic sectional curvature of such a connection be equal to a non-zero constant.

More specifically, let $\nabla$ be a \emph{Hermitian connection} on a Hermitian manifold $(M^n,g)$, that is, $\nabla g=0$ and $\nabla J=0$. We denote its torsion, curvature, and holomorphic sectional curvature by $T$, $R$, and $H$, respectively. Motivated by Conjectures~\ref{conj:Chern-HSC} and \ref{conj2}, we study the rigidity of such a connection under the assumptions $\nabla T=0$ and $H\equiv c$. Here and below, $R^{1,1}$ denotes the $(1,1)$-component of the $\End(T_\C M)$-valued curvature $2$-form with respect to its first two arguments. Our first result is a partial Ambrose--Singer statement.

\begin{theorem}\label{thm:partial-AS}
Let $(M^n,g)$ be a Hermitian manifold endowed with a Hermitian
connection $\nabla$ whose torsion $T$ is $\nabla$-parallel. If $H$ is globally constant, namely  $H\equiv c$, then
$\nabla(R^{1,1})=0$. In particular, if $R$ is of type $(1,1)$, then $\nabla R=0$, and
$\nabla$ is an Ambrose--Singer connection.
\end{theorem}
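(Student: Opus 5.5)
The plan is to show that $\nabla(R^{1,1})$ has the full Kähler-type symmetries. Once that is known, it is determined by its values on arguments of the form $X\bar X X\bar X$, and these values vanish because $H\equiv c$. No second Bianchi identity or global argument should be needed; everything will be pointwise except the use of $\nabla T=0$ and $\nabla g=0$.

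\emph{Step 1: holomorphic sectional curvature.} Write $R_{i\bar j k\bar l}=g(R(e_i,\bar e_j)e_k,\bar e_l)$ in a local unitary frame. The condition $H\equiv c$ says $R_{X\bar X X\bar X}=c|X|^4$ for all $X\in T^{1,0}M$. By polarization this is equivalent to the following: the symmetrization of $R_{i\bar j k\bar l}$ in $(i,k)$ and in $(j,l)$ equals $\frac c2(g_{i\bar j}g_{k\bar l}+g_{i\bar l}g_{k\bar j})$. Since $\nabla g=0$, $\nabla J=0$ (so $\nabla$ preserves types) and $c$ is constant, differentiating gives $(\nabla_V R)_{X\bar X X\bar X}=0$ for every $V\in T_\C M$ and $X\in T^{1,0}M$. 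Equivalently, the $(i,k)$/$(j,l)$-symmetrization of $\nabla_V R^{1,1}$ vanishes.

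\emph{Step 2: first Bianchi identity with torsion.} For a connection with torsion, the first Bianchi identity reads
\[
\mathfrak{S}\,R(X,Y)Z=\mathfrak{S}\big(T(T(X,Y),Z)+(\nabla_XT)(Y,Z)\big),
\]
where $\mathfrak S$ is the cyclic sum. Here the $\nabla T$ terms drop out. Apply it to $(e_i,\bar e_j,e_k)$ and pair with $\bar e_l$. The term $R(e_k,e_i)\bar e_j$ lies in $T^{0,1}$ because $R$ preserves type, so it pairs to zero with $\bar e_l$. This yields
\[
R_{i\bar j k\bar l}-R_{k\bar j i\bar l}=Q_{i\bar jk\bar l}(T,T),
\]
where $Q$ is a universal expression quadratic in $T$, contracted with $g$. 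Since $\nabla T=0$ and $\nabla g=0$, we get $\nabla Q=0$. Hence $\nabla_V R^{1,1}$ is symmetric in $(i,k)$.

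For $(j,l)$, use skew-adjointness of $R(X,Y)$ together with conjugation, $\overline{R_{i\bar jk\bar l}}=R_{j\bar i l\bar k}$. Applying the same identity to the conjugate shows that $R_{i\bar jk\bar l}-R_{i\bar lk\bar j}$ is again a parallel torsion expression. So $\nabla_VR^{1,1}$ is also symmetric in $(j,l)$.

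\emph{Step 3: conclusion.} A tensor with these two symmetries equals its own symmetrization. By Step 1 that symmetrization is zero, so $\nabla(R^{1,1})=0$. If $R$ is of type $(1,1)$, then $R=R^{1,1}$, so $\nabla R=0$. Together with $\nabla T=0$, this is precisely the definition of an Ambrose--Singer connection.

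The step I expect to be the main obstacle is Step 2. One must get the Bianchi identity for a Hermitian connection with torsion exactly right, with correct types and signs, and must check carefully that every correction term is built only from $T$ and $g$, and hence is genuinely $\nabla$-parallel. It also has to be checked that the conjugation argument gives the $(j,l)$-symmetry without introducing non-parallel terms.
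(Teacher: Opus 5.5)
Your argument is correct and is essentially the paper's proof. It uses the same first Bianchi identity (with $\nabla T=0$ removing the derivative terms and the $T^{0,1}$-valued term $R(e_k,e_i)\bar e_j$ dropping out), the same conjugation step for the $(j,\ell)$-swap, and the same polarization of $H\equiv c$. The only difference is cosmetic. The paper first solves algebraically for $R_{i\bar jk\bar\ell}$ as $\frac c2(\delta_{ij}\delta_{k\ell}+\delta_{i\ell}\delta_{kj})$ plus parallel $Q$-terms and then differentiates. You differentiate first and note that $\nabla_V R^{1,1}$ has both swap symmetries, so it equals its own symmetrization $\nabla_V\widehat R=0$.
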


We remark that the constancy of $c$ in Theorem~\ref{thm:partial-AS} is important. If one only assumes that the holomorphic sectional curvature is pointwise constant, namely $H=f(p)$ for a smooth function $f$,  then the proof of Theorem~\ref{thm:partial-AS} no longer yields parallel curvature, as terms involving $df$ will appear. The main theorem is the following.

\begin{theorem}\label{thm:main}
Let $(M^n,g)$ be a Hermitian manifold endowed with a Hermitian
connection $\nabla$ whose torsion $T$ is $\nabla$-parallel. If $H\equiv c\neq0$, then $T=0$. Consequently, $g$ is K\"ahler and locally a complex space form.
\end{theorem}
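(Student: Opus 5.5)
We will work at a single point $p$ and use only algebraic consequences of $\nabla T=0$ and $H\equiv c$. In a unitary frame write $T(e_i,e_j)=T^k_{ij}e_k+\dots$; since $\nabla J=0$, the torsion splits into the $(2,0)$-part $T^{1,0}(e_i,e_j)$, and the mixed parts $T(e_i,\bar e_j)$, which determine $\nabla$ relative to the Chern connection. The key identity is the first Bianchi identity for a connection with torsion:
\[
\mathfrak S\,R(X,Y)Z=\mathfrak S\big((\nabla_X T)(Y,Z)+T(T(X,Y),Z)\big).
\]
With $\nabla T=0$ the first term vanishes, so the cyclic sum of $R$ is the quadratic expression $\mathfrak S\,T(T(X,Y),Z)$. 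The second Bianchi identity also simplifies to $\mathfrak S\,\big((\nabla_XR)(Y,Z)+R(T(X,Y),Z)\big)=0$.

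\textbf{Step 1.} We express $R^{1,1}$ through $H$. Since $H\equiv c$, the full symmetrization of $R_{i\bar j k\bar l}$ over $(i,k)$ and $(\bar j,\bar l)$ equals $\frac{c}{2}(\delta_{ij}\delta_{kl}+\delta_{il}\delta_{kj})$. Combining this with the $(1,1)$ components of the first Bianchi identity, the failure of $R^{1,1}$ to have K\"ahler symmetries is given by quadratic expressions in $T$. Together with the symmetrization, this writes $R^{1,1}=c\,R_0+Q(T,\bar T)$, where $R_0$ is the space-form tensor and $Q$ is an explicit quadratic form in $T$. This gives Theorem \ref{thm:partial-AS}, since $T$ and $g$ are parallel, and we will use this explicit form.

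\textbf{Step 2.} We extract a contradiction using parallelism. Since $\nabla T=0$ and $\nabla R^{1,1}=0$, the curvature acts as a derivation annihilating $T$: for every $X,Y$,
\[
R(X,Y)\cdot T=0 \quad\text{and}\quad R(X,Y)\cdot R^{1,1}=0 .
\]
We apply the first relation with $(X,Y)=(e_i,\bar e_j)$ and substitute $R^{1,1}=cR_0+Q$. The space-form part acts on a tensor of type $(1,2)$ essentially through $c\,(\delta_{ij}\,\mathrm{id}+e_i\otimes\bar e_j)$. Contracting, for instance setting $i=j$ and summing, gives $c\,(n+1)\cdot(\text{weight})\,T$ plus cubic terms in $T$. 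Next we pair this identity with $\bar T$ and take the full trace. This yields a scalar identity
\[
c\,\alpha\,|T|^2=\text{(quartic in }T),
\]
where $\alpha\neq0$ depends only on $n$.

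\textbf{Main obstacle.} The hard part is controlling the quartic term. I would try to show that it equals a \emph{nonnegative} combination, or a difference of squares that vanishes, using the $(2,0)$-component of the first Bianchi identity, $\mathfrak S\,T(T(X,Y),Z)=0$ on $(1,0)$-vectors. This is a Jacobi-type identity, and it would make the quartic term expressible via $|Q|^2$-type norms. If a single sign does not follow, the plan is to run the argument twice, once from $R\cdot T=0$ and once from $R\cdot R^{1,1}=0$. The two resulting identities should have opposite-sign $c$-coefficients against the same quartic, which forces $c|T|^2=0$, so $T=0$.

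\textbf{Conclusion.} Once $T=0$, the connection $\nabla$ is torsion free and Hermitian, so it is the Levi-Civita connection and $g$ is K\"ahler. Then $H\equiv c$ with K\"ahler symmetries gives $R=cR_0$, so $g$ is locally a complex space form.
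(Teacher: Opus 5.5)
\textbf{There is a genuine gap: Step 2, which carries the whole content of the theorem, is not actually carried out.}

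Your Step 1 is correct, and so is your final paragraph. Step 1 is exactly how the paper obtains Theorem~\ref{thm:partial-AS}, via Lemma~\ref{lem:curvature-difference} and Proposition~\ref{prop:curvature-reconstruction}. Step 2 fails in three concrete places.

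(i) The identity you get is not of the form $c\,\alpha|T|^2=$ quartic. Put $\Lambda=\sum_iR(e_i,\bar e_i)$. The space-form part of $\Lambda$ acts by $\frac{c(n+1)}2$ on $T^{1,0}M$ and by $-\frac{c(n+1)}2$ on $T^{0,1}M$. Hence it multiplies the $(2,0)$-part $A$ of $T$ by $-\frac{c(n+1)}2$, but the $T^{1,0}$-valued mixed part $B$ by $+\frac{c(n+1)}2$. What you actually get is two identities, $-\frac{c(n+1)}2|A|^2=q_1(T)$ and $\frac{c(n+1)}2|B|^2=q_2(T)$. Here $q_1,q_2$ are quartic, come from the $Q$-part of $\Lambda$, and have no visible sign.

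(ii) The tool you propose for controlling the quartic is not available. On three $(1,0)$-vectors the Bianchi identity gives $\mathfrak S\,T(T(X,Y),Z)=\mathfrak S\,R(X,Y)Z$. The right side involves the $(2,0)$-part of $R$, which is not controlled by $H$ and is not assumed to vanish. It does vanish for the Chern connection, but Theorem~\ref{thm:partial-AS} deliberately allows $R\neq R^{1,1}$.

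(iii) The fallback of using $R\cdot R^{1,1}=0$ is vacuous. Curvature endomorphisms lie in the (complexified) unitary algebra, $R_0$ is $U(n)$-invariant, and your $Q$ is a $U(n)$-equivariant polynomial in $T$. So $R(X,Y)\cdot R^{1,1}=0$ follows formally from $R(X,Y)\cdot T=0$ and gives no new identity. Moreover, a one-sided bound on $q_1$ alone could dispose of only one sign of $c$.

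Your trace argument does close when $\mathcal Q=0$, for instance for the Chern connection. There $R^c=cR_0$ and $\Lambda\cdot T^c=-\frac{c(n+1)}2T^c$, which gives the $c\neq0$ part of Corollary~\ref{cor:Chern} directly. In general, take an eigenbasis of the Hermitian matrix $\sum_iR_{i\bar ik\bar\ell}$ with eigenvalues $\mu_k$. Then $\Lambda\cdot A=0$ only says $\mu_k=\mu_i+\mu_j$ whenever $A^k_{ij}\neq0$. This is essentially the first-slot trace of the paper's block relations $\rho_W=\rho_E+\rho_F$ (Lemma~\ref{lem:block-Ricci-relations}). On its own it yields no contradiction, since the $\mu_k$ absorb the $\mathcal Q$-contribution.

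The paper instead keeps these relations block by block, $r_{\beta W}=r_{\beta E}+r_{\beta F}$ for every $\beta$. It couples them with the cross-block rule $r_{\alpha\beta}+r_{\beta\alpha}=2c$ and the clean-block value $d_\alpha=\frac{n_\alpha+1}{2n_\alpha}c$ through the quadratic identity of Lemma~\ref{lem:quadratic}. It then excludes every torsion block component using:
\begin{itemize}
\item central characters and zero-weight vectors (Lemma~\ref{lem:trivial-central-character});
\item null pairs (Lemma~\ref{lem:null-pair});
\item the Segre dimension bound (Lemma~\ref{lem:Segre-dimension}).
\end{itemize}

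Some structural use of $c\neq0$ of this kind is unavoidable. Isosceles Hopf manifolds have $\nabla^bT^b=0$, $H^b\equiv0$ and $T^b\neq0$, so your quartic terms do vanish on some nonzero parallel torsion.
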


The last sentence in the above theorem means that the universal cover of $(M^n,g)$ is holomorphically isometric to an open subset of a complex space form. We remark that Theorem~\ref{thm:main} requires neither compactness
nor completeness. On the other hand, the assumption $c\neq0$ cannot
be removed. Standard isosceles Hopf manifolds have parallel Bismut
torsion and satisfy $H^b\equiv0$, while their Bismut curvature need
not vanish in complex dimension at least three
\cite{ChenZStrominger}. Nevertheless, Theorem~\ref{thm:partial-AS} still shows that the $(1,1)$-part of the
curvature is parallel when $c=0$.

As applications, we obtain the following consequences.

\begin{corollary}\label{cor:Chern}
Let $(M^n,g)$ be a complete Hermitian manifold with $\nabla^cT^c=0$ and constant Chern holomorphic sectional curvature $H^c\equiv c$. If $c\neq0$, then $g$ is K\"ahler and locally a complex space form. If $c=0$, then $g$ is Chern flat.
\end{corollary}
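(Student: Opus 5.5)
The case $c\neq0$ is immediate from Theorem~\ref{thm:main} applied to $\nabla=\nabla^c$: the hypothesis $\nabla^cT^c=0$ is exactly parallel torsion, so $T^c=0$. Hence $g$ is K\"ahler, $\nabla^c$ is the Levi-Civita connection, and $g$ is locally a complex space form. Completeness is not needed for this case. The real work is $c=0$, and there the plan combines Theorem~\ref{thm:partial-AS} with the special structure of the Chern curvature.

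\textbf{Step 1: reduce to parallel curvature.} The Chern curvature is always of type $(1,1)$, i.e.\ $R^c=(R^c)^{1,1}$. Theorem~\ref{thm:partial-AS} therefore gives $\nabla^cR^c=0$, so $\nabla^c$ is an Ambrose--Singer connection with $\nabla^cT^c=\nabla^cR^c=0$. Since $g$ is complete, the Ambrose--Singer theorem makes the universal cover $\tilde M$ a homogeneous space $G/K$. On it $\nabla^c$ is the canonical connection, with $T^c$ and $R^c$ constant $K$-invariant tensors on the model tangent space $V=\mathfrak m$.

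\textbf{Step 2: use $H^c\equiv0$.} Write $R=R^c$ in a unitary frame. $H^c\equiv0$ means that the symmetrization of $R_{i\bar j k\bar l}$ in $(i,k)$ and in $(j,l)$ vanishes. Because $\nabla^c$ is Chern with parallel torsion, the first Bianchi identity reduces to algebraic relations. Its $(2,0)$-part gives a quadratic identity on the torsion. Its $(1,1)$-part gives
\[
R_{k\bar j i\bar l}-R_{i\bar j k\bar l}=-T^{r}_{ik}\,\overline{T^{l}_{jr}}+(\text{terms that vanish when }\nabla T=0).
\]
The last Bianchi-type relation I would use is $\nabla T=0$ with its integrability condition. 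Parallel torsion is preserved by curvature, so
\[
R_{x\bar y}\cdot T=0,
\]
where $R_{x\bar y}$ acts as a derivation. Symmetrizing $R$ fully, which gives zero, and using the Bianchi identity to rewrite the antisymmetric parts through $T\ast\bar T$, I expect a formula roughly of the form
\[
R_{i\bar j k\bar l}=\tfrac14\bigl(T^{r}_{ik}\overline{T^{r}_{jl}}+\dots\bigr)-\tfrac12\bigl(T^{l}_{kr}\overline{T^{j}_{ir}}+\dots\bigr).
\]
This expresses $R$ entirely as a quadratic in $T$.

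\textbf{Step 3: show $R=0$.} Contract the relation $R\cdot T=0$ against $\bar T$ and take the trace. Then pair the resulting identity with the explicit formula from Step 2. This should produce a vanishing sum of squares of norms, such as $|T\ast T|^2$ or $|\sum_r T^r_{ik}\overline{T^r_{jl}}|^2$, forcing these quadratics, and hence $R$, to vanish.

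An alternative route is needed if the torsion quadratics do not cancel cleanly. In that case I would use Boothby-type structure. The torsion-parallel Chern condition makes $\tilde M=G/K$ with the complex structure $K$-invariant, and one wants to show that $K$ acts trivially. The idea is that a nonzero holonomy algebra $\mathfrak k\subset\mathfrak u(n)$ spanned by the $R_{x\bar y}$ must annihilate $T$. Combined with the vanishing of the symmetrized curvature, this should force $\mathfrak k=0$, since $H\equiv0$ makes $R_{x\bar x}$ act with zero ``diagonal'' $\langle R_{x\bar x}x,x\rangle$.

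\textbf{Main obstacle.} The hard step is Step 3. Turning $H^c\equiv0$ plus parallel torsion into $R=0$ requires a positivity argument, and the isosceles Hopf examples show that for Bismut this fails. So the Chern-specific features must enter essentially: the $(1,1)$ type of $R^c$ and the $(2,0)$ type of $T^c$. Completeness is used only to pass to the homogeneous model. If the algebra is too messy, I would fall back on the structure theory of Ambrose--Singer/Chern torsion-parallel homogeneous spaces.
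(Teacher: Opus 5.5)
Your treatment of $c\neq0$ is the same as the paper's: apply Theorem~\ref{thm:main} to $\nabla^c$. For $c=0$, however, there is a genuine gap. Step~3 is only an expectation (``this should produce a vanishing sum of squares''), not an argument. The alternative route does not work as stated either: the vanishing of $\langle R(x,\bar x)x,\bar x\rangle$ for all $x$ does not by itself force the holonomy algebra to vanish. The Bismut isosceles Hopf examples, which you mention yourself, show exactly this.

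The ``structure theory'' you defer to is what the paper actually uses. By Theorem~\ref{thm:partial-AS}, $\nabla^cR^c=0$. Together with $\nabla^cT^c=0$ and completeness, the Ni--Zheng theorem on locally Chern homogeneous manifolds splits the universal cover into a Chern-flat complex Lie group times Hermitian symmetric (K\"ahler) factors. The condition $H^c\equiv0$ restricts to each factor, and on a K\"ahler factor polarization shows that the holomorphic sectional curvature determines the whole curvature tensor. Hence every factor is flat. You name neither this splitting nor the polarization step, so the $c=0$ case is not proved.

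Your Step~2 formula is also incorrect, and correcting it makes the case trivial. For the Chern connection $T^c(T^{1,0}M,T^{0,1}M)=0$, so every term of $\mathcal Q(X,\bar Y,Z)$ in \eqref{eq:Q-def} vanishes. The $(1,1)$-part of the Bianchi identity is then simply $R^c(X,\bar Y)Z-R^c(Z,\bar Y)X=(\nabla^c_{\bar Y}T^c)(Z,X)$. There is no $T\bar T$ term such as $-T^r_{ik}\overline{T^l_{jr}}$; such terms only arise for connections whose torsion has mixed-type components, like Bismut.

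Under $\nabla^cT^c=0$, the Chern curvature therefore satisfies all K\"ahler symmetries. Proposition~\ref{prop:curvature-reconstruction} with $Q=0$ then gives $R^c_{i\bar jk\bar\ell}=\frac c2(\delta_{ij}\delta_{k\ell}+\delta_{i\ell}\delta_{kj})$. For $c=0$ this is $R^c=0$ directly, with no completeness, homogeneity, or positivity argument needed. This is precisely the ``Chern-specific feature'' (the $(2,0)$ type of $T^c$) that you correctly suspected must enter, but your plan never actually uses it.
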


The case $c=0$ in Corollary~\ref{cor:Chern} uses the structure
theorem for locally Chern homogeneous Hermitian manifolds due to
Ni--Zheng \cite{NiZhengCAS}. We next turn to the Bismut connection.

\begin{corollary}\label{cor:BAS}
Let $(M^n,g)$ be Bismut torsion-parallel and suppose that its Bismut holomorphic sectional curvature is constant. Then $\nabla^b$ is an Ambrose--Singer connection. If the constant is nonzero, then $g$ is K\"ahler and locally a complex space form.
\end{corollary}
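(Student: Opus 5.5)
Corollary~\ref{cor:BAS} follows by specializing Theorems~\ref{thm:partial-AS} and~\ref{thm:main} to $\nabla=\nabla^b$, plus one extra curvature fact. The Bismut connection is Hermitian, and the BTP assumption is exactly $\nabla^bT^b=0$, so both theorems apply. If the Bismut holomorphic sectional curvature is a nonzero constant, Theorem~\ref{thm:main} gives $T^b=0$ directly. Then $\nabla^b=\nabla^c=\nabla^r$, so $g$ is K\"ahler and locally a complex space form, which proves the second assertion.

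For the Ambrose--Singer assertion, Theorem~\ref{thm:partial-AS} already gives $\nabla^b(R^{b,1,1})=0$. We also have $\nabla^bT^b=0$. So it remains to show that the remaining components $R^{b,2,0}$ and $R^{b,0,2}$ of the Bismut curvature are $\nabla^b$-parallel. The plan is to use the known structure of BTP manifolds (Zhao--Zheng), following these steps.

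\begin{enumerate}
\item Recall that $\nabla^bT^b=0$ and $T^b$ is totally skew (a $3$-form $H$). Hence $dH$ is a quadratic expression in $T^b$ and is therefore $\nabla^b$-parallel.
\item Apply the first Bianchi identity for a connection with parallel totally skew torsion:
\[
\mathfrak S\, R^b(X,Y,Z,W)=\mathfrak S\, g(T^b(X,Y),T^b(Z,W))\pm\text{(terms in } dH).
\]
Its right-hand side is $\nabla^b$-parallel by the previous step.
\item For parallel skew torsion, $R^b$ satisfies the pair symmetry $R^b(X,Y,Z,W)=R^b(Z,W,X,Y)$. Since $\nabla^b J=0$, the $(2,0)$-part in the last two arguments vanishes. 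By pair symmetry, the $(2,0)$-part of $R^b$ in its first two arguments is then determined by mixed-type components, namely pieces where the first pair is $(2,0)$ and the last pair is $(1,1)$.
\item Use the Bianchi identity from step~2 to express these mixed pieces through the $(1,1)$-part together with torsion-quadratic terms.
\item Conclude that $R^{b,2,0}$ is a linear combination of $R^{b,1,1}$ and $T^b\otimes T^b$, both of which are parallel. Hence $\nabla^bR^b=0$ and $\nabla^b$ is Ambrose--Singer.
\end{enumerate}

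The main obstacle is the algebra in steps~3 and~4. One has to check that the Bianchi identity genuinely solves for the $(2,0)$ components in terms of $(1,1)$ ones, rather than merely constraining their symmetrization. I would expand in a unitary frame $e_i$ and compare the components $R^b_{ijk\bar l}$ with $R^b_{k\bar l ij}$ via pair symmetry. I would then apply Bianchi to the triple $(i,j,k)$ with $\bar l$ fixed, so that $R^b_{ijk\bar l}$ becomes a sum of terms of the form $R^b_{\cdot\,\cdot\,\cdot\,\bar\cdot}$ with mixed first pair plus $T^b$-quadratics.

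If this route fails, the fallback is the BTP structure theorem of Zhao--Zheng, which gives explicit formulas for $R^b$ in terms of $T^b$ and the $(1,1)$-part.
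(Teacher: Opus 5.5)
Your treatment of the nonzero-constant case matches the paper (Theorem~\ref{thm:main} gives $T^b=0$). Your reduction of the Ambrose--Singer claim to controlling $R^{b,2,0}$ and $R^{b,0,2}$ on top of Theorem~\ref{thm:partial-AS} is also the right one. But that control is never actually established. Steps 4--5 are an unexecuted computation whose outcome you explicitly leave open. Moreover, the Bianchi identity on the triple $(e_i,e_j,e_k)$ with $\bar e_\ell$ fixed cannot do what you want: all three cyclic terms $R^b_{ijk\bar\ell}$, $R^b_{jki\bar\ell}$, $R^b_{kij\bar\ell}$ have a $(2,0)$ first pair. That identity only constrains their cyclic sum and never produces mixed first pairs. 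As written, the first assertion rests on this unverified step or on a vaguely stated fallback citation. The relevant Zhao--Zheng fact is simply that $R^b$ is of type $(1,1)$, not ``explicit formulas''.

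The missing observation is already inside your step 3; you stop one step short. Pair symmetry holds for a metric connection with parallel skew torsion, since then $\mathfrak S R^b=\sigma_{T^b}$ is a $4$-form. It gives
$R^b_{ijk\bar\ell}=R^b_{k\bar\ell ij}=\langle R^b(e_k,\bar e_\ell)e_i,e_j\rangle$.
This vanishes because $R^b(e_k,\bar e_\ell)$ preserves $T^{1,0}M$, which is isotropic for the bilinear extension of $g$. Hence $R^b$ is of type $(1,1)$, which is exactly what the paper takes from Zhao--Zheng. The last clause of Theorem~\ref{thm:partial-AS} then gives $\nabla^bR^b=0$, with no further Bianchi computation needed.

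Alternatively, apply \eqref{eq:first-Bianchi-general} to $(e_i,e_j,\bar e_\ell)$ and take the $(0,1)$-part. By type preservation, only $R(e_i,e_j)\bar e_\ell$ survives on the curvature side. So $R^{2,0}$ is a torsion-quadratic expression and hence parallel. This version works for any Hermitian connection with $\nabla T=0$, without using skew-symmetry of the torsion or pair symmetry.
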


The rest of the article is organized as follows. In Section~\ref{sec:prelim} we will set up the notations and collect some preliminaries results. Section~\ref{sec:parallel-curvature} will be devoted to the proof of Theorem~\ref{thm:partial-AS}. In Section~\ref{sec:block} we will discuss the holonomy decomposition and the corresponding block Ricci relations. Sections~\ref{sec:repeated-components} and~\ref{sec:three-block} deal with
block components having repeated block indices and three pairwise
distinct block indices, respectively. The final section completes
the proof of Theorem~\ref{thm:main} and gives the applications to
the Chern and Bismut connections.

\section{Preliminaries}\label{sec:prelim}

Throughout the paper, $(M^n,g)$ denotes a Hermitian manifold of complex dimension $n$, with complex structure $J$. Let $\nabla$ be a Hermitian connection. We extend $g$ complex bilinearly and write $\langle\,\cdot,\cdot\,\rangle$ for the resulting pairing. The torsion and curvature of $\nabla$ will be denoted by
$T(X,Y)=\nabla_XY-\nabla_YX-[X,Y]$ and $R(X,Y)Z=\nabla_X\nabla_YZ-\nabla_Y\nabla_XZ-\nabla_{[X,Y]}Z$.

Let $\{e_1,\ldots,e_n\}$ be a local unitary frame of $T^{1,0}M$. Unless stated otherwise, Latin indices range from $1$ to $n$. We use
$R_{i\bar j k\bar\ell}=\langle R(e_i,\bar e_j)e_k,\bar e_\ell\rangle$. Since $\nabla$ is Hermitian, curvature preserves type and satisfies $R_{i\bar j k\bar\ell}=\overline{R_{j\bar i\ell\bar k}}$.

For $0\neq X\in T^{1,0}M$, the holomorphic sectional curvature of
$\nabla$ in the direction of $X$ is $H(X)=R_{X\bar X X\bar X}/|X|^4$. Thus,
\begin{equation}\label{eq:HSC-polarization}
H\equiv c\quad\Longleftrightarrow\quad
\widehat R_{i\bar j k\bar\ell}=\frac c2(\delta_{ij}\delta_{k\ell}+\delta_{i\ell}\delta_{kj})
\end{equation}
with respect to any local unitary frame, where
\begin{equation*}
\widehat R_{i\bar j k\bar\ell}
=\frac14\big(
R_{i\bar j k\bar\ell}
+R_{k\bar j i\bar\ell}
+R_{i\bar\ell k\bar j}
+R_{k\bar\ell i\bar j}
\big)
\end{equation*}
is the symmetrization of $R$. If $R$ satisfies the K\"ahler curvature symmetries, then $\widehat R=R$. For a general Hermitian connection, however, this need not be the case, and the holomorphic sectional curvature determines only $\widehat R$, rather than the full curvature tensor $R$.

We also use the first Bianchi identity for a connection with torsion:
\begin{equation}\label{eq:first-Bianchi-general}
\mathfrak S_{X,Y,Z}\{R(X,Y)Z-(\nabla_ZT)(X,Y)-T(T(X,Y),Z)\}=0,
\end{equation}
where $\mathfrak S$ denotes cyclic summation; see \cite{Agricola,KN,ChenZBTP}. Since $\nabla J=0$ and the complex structure is integrable, the torsion satisfies
\begin{equation}\label{eq:torsion-type}
T(T^{1,0}M,T^{1,0}M)\subset T^{1,0}M.
\end{equation}

We shall also use the following form of Schur's lemma for finite-dimensional complex representations; see \cite{Humphreys}.

\begin{lemma}[Schur]\label{lem:Schur}
Let $V$ and $W$ be finite-dimensional irreducible complex representations of a group $G$. Any nonzero $G$-equivariant map $V\to W$ is an isomorphism. In particular, an endomorphism of $V$ commuting with $G$ is a complex scalar multiple of the identity map. If the representation is unitary and the endomorphism is Hermitian, the scalar is real. Consequently, every $G$-invariant Hermitian form on $V$ is a real multiple of the Hermitian metric.
\end{lemma}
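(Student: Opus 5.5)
The statement just above is Schur's lemma, a standard fact; we reproduce the usual argument rather than cite it blindly.

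First, let $\phi:V\to W$ be a nonzero $G$-equivariant linear map. Its kernel $\ker\phi\subset V$ and image $\operatorname{im}\phi\subset W$ are $G$-invariant subspaces, since $\phi(gv)=g\phi(v)$. By irreducibility of $V$, $\ker\phi$ is $0$ or $V$; it is not $V$ because $\phi\neq0$, so $\phi$ is injective. By irreducibility of $W$, $\operatorname{im}\phi$ is $0$ or $W$; it is not $0$, so $\phi$ is surjective. Hence $\phi$ is an isomorphism.

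Next, let $A\in\End(V)$ commute with $G$. Because $V$ is a finite-dimensional complex vector space (and nonzero, being irreducible), $A$ has an eigenvalue $\lambda\in\C$. The map $A-\lambda\,\mathrm{id}$ is again $G$-equivariant, and it is not injective, so by the first part it must be zero. Thus $A=\lambda\,\mathrm{id}$. If moreover $V$ carries a $G$-invariant Hermitian inner product and $A$ is Hermitian (self-adjoint) with respect to it, then for any $v\neq0$ we have $\lambda|v|^2=\langle Av,v\rangle=\langle v,Av\rangle=\bar\lambda|v|^2$, so $\lambda\in\R$.

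Finally, let $h$ be a $G$-invariant Hermitian form on $V$, and let $\langle\cdot,\cdot\rangle$ denote the invariant Hermitian metric. By Riesz representation there is a unique $A\in\End(V)$ with $h(u,v)=\langle Au,v\rangle$ for all $u,v$. Since $h$ is Hermitian, $A$ is self-adjoint. Invariance of both forms gives $\langle Agu,gv\rangle=\langle Au,v\rangle=\langle gAu,gv\rangle$ for all $u,v$. Because each $g$ acts bijectively, this yields $Ag=gA$. The previous step then shows $A=\lambda\,\mathrm{id}$ with $\lambda\in\R$, so $h=\lambda\langle\cdot,\cdot\rangle$.

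There is no real obstacle here. The only points to keep in mind are the existence of an eigenvalue, which uses that the field is $\C$ and $V$ is finite-dimensional, and the use of unitarity in translating invariance of $h$ into commutation of $A$ with $G$.
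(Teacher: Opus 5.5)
Your proof is correct. It is the standard argument: invariance of kernel and image, an eigenvalue $\lambda$ of the commuting endomorphism so that $A-\lambda\,\mathrm{id}$ must vanish, self-adjointness forcing $\lambda\in\R$, and the Riesz representative of an invariant Hermitian form commuting with $G$ by unitarity. The paper gives no proof of this lemma and simply cites Humphreys for it, where essentially this argument appears, so you have supplied the standard proof the paper omits.
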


This is the only form of Schur's lemma used below. It will be applied to invariant Hermitian forms and parallel maps between irreducible holonomy blocks.

\section{Parallelism of the \texorpdfstring{$(1,1)$}{(1,1)}-part of the curvature}\label{sec:parallel-curvature}

Throughout this section, $\nabla$ is the Hermitian connection fixed in Section~\ref{sec:prelim}, $\nabla T=0$, and $H$ is globally constant, say $H\equiv c$. We prove Theorem~\ref{thm:partial-AS}. For $X,Z\in T^{1,0}M$ and $\bar Y\in T^{0,1}M$, define
\begin{equation}\label{eq:Q-def}
\begin{split}
\mathcal Q(X,\bar Y,Z):=\big\{&T(T(X,\bar Y),Z)+T(T(\bar Y,Z),X)+T(T(Z,X),\bar Y)\big\}^{1,0}.
\end{split}
\end{equation}
The projection to type $(1,0)$ in \eqref{eq:Q-def} is $\nabla$-parallel because $\nabla J=0$.

\begin{lemma}\label{lem:curvature-difference}
If $\nabla T=0$, then
\begin{equation}\label{eq:curvature-difference}
R(X,\bar Y)Z-R(Z,\bar Y)X=\mathcal Q(X,\bar Y,Z).
\end{equation}
Equivalently, if $Q_{i\bar j k\bar\ell}=\langle\mathcal Q(e_i,\bar e_j,e_k),\bar e_\ell\rangle$, then
$$
R_{i\bar j k\bar\ell}-R_{k\bar j i\bar\ell}=Q_{i\bar j k\bar\ell}.
$$
\end{lemma}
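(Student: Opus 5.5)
The plan is to apply the first Bianchi identity \eqref{eq:first-Bianchi-general} to the triple $(X,\bar Y,Z)$ with $X,Z\in T^{1,0}M$ and $\bar Y\in T^{0,1}M$, and then take the $(1,0)$-component.

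Since $\nabla T=0$, all terms $(\nabla_\cdot T)(\cdot,\cdot)$ drop out. What remains is
$$
R(X,\bar Y)Z+R(\bar Y,Z)X+R(Z,X)\bar Y=T(T(X,\bar Y),Z)+T(T(\bar Y,Z),X)+T(T(Z,X),\bar Y).
$$

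Now I use the type properties of a Hermitian connection. Since $\nabla J=0$, each curvature endomorphism $R(\cdot,\cdot)$ commutes with $J$ and so preserves $T^{1,0}M$ and $T^{0,1}M$. Hence $R(X,\bar Y)Z$ and $R(\bar Y,Z)X$ are of type $(1,0)$, while $R(Z,X)\bar Y$ is of type $(0,1)$.

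Taking the $(1,0)$-part of both sides therefore kills $R(Z,X)\bar Y$. Using antisymmetry, $R(\bar Y,Z)X=-R(Z,\bar Y)X$. The left side becomes $R(X,\bar Y)Z-R(Z,\bar Y)X$, and the right side is by definition $\mathcal Q(X,\bar Y,Z)$ from \eqref{eq:Q-def}. This gives \eqref{eq:curvature-difference}.

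For the index form, I set $X=e_i$, $\bar Y=\bar e_j$, $Z=e_k$ and pair with $\bar e_\ell$. Using $R_{i\bar jk\bar\ell}=\langle R(e_i,\bar e_j)e_k,\bar e_\ell\rangle$, this yields $R_{i\bar jk\bar\ell}-R_{k\bar ji\bar\ell}=Q_{i\bar jk\bar\ell}$.

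The only point needing care is the bookkeeping of signs and conventions. One must check that the paper's form of the Bianchi identity, with torsion term $T(T(X,Y),Z)$ and the stated sign, is applied consistently to the cyclic order $(X,\bar Y,Z)$. Note that \eqref{eq:torsion-type} is not needed, since the projection is taken explicitly rather than using the type of $T$.
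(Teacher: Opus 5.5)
Your proposal is correct and matches the paper's proof essentially step for step. You apply the first Bianchi identity to $(X,\bar Y,Z)$, drop the $\nabla T$ terms, use type preservation of the curvature to discard $R(Z,X)\bar Y$ when taking the $(1,0)$-part, and rewrite $R(\bar Y,Z)X=-R(Z,\bar Y)X$ by skew-symmetry.
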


\begin{proof}
Insert $(X,\bar Y,Z)$ into \eqref{eq:first-Bianchi-general}. Since $\nabla T=0$, all covariant-derivative terms disappear and we obtain
$$
\begin{aligned}
R(X,\bar Y)Z+R(\bar Y,Z)X+R(Z,X)\bar Y
={}&T(T(X,\bar Y),Z)+T(T(\bar Y,Z),X)\\
&+T(T(Z,X),\bar Y).
\end{aligned}
$$
Using the skew-symmetry of the curvature in its first two variables, $R(\bar Y,Z)X=-R(Z,\bar Y)X$. Because $\nabla$ is Hermitian, every curvature endomorphism preserves type. Thus the first two terms on the left lie in $T^{1,0}M$, whereas $R(Z,X)\bar Y\in T^{0,1}M$. Taking the $(1,0)$-part therefore gives
$$
R(X,\bar Y)Z-R(Z,\bar Y)X
=\big\{T(T(X,\bar Y),Z)+T(T(\bar Y,Z),X)+T(T(Z,X),\bar Y)\big\}^{1,0},
$$
which is \eqref{eq:curvature-difference}. Pairing with $\bar e_\ell$ gives the component form.
\end{proof}

\begin{proposition}\label{prop:curvature-reconstruction}
Assume that $\nabla T=0$ and $H\equiv c$. Then
\begin{equation}\label{eq:curvature-reconstruction}
\begin{split}
R_{i\bar j k\bar\ell}={}&
\frac c2\big(\delta_{ij}\delta_{k\ell}
+\delta_{i\ell}\delta_{kj}\big)
+\frac14Q_{i\bar j k\bar\ell}
+\frac12\overline{Q_{j\bar i\ell\bar k}}
+\frac14Q_{i\bar\ell k\bar j}
\end{split}
\end{equation}
for all $1\le i,j,k,\ell\le n$, with respect to any local unitary
frame.
\end{proposition}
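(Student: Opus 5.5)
The plan is to express every term in the symmetrization $\widehat R$ through the single component $R_{i\bar j k\bar\ell}$, using two ``transposition'' identities: one coming from Lemma~\ref{lem:curvature-difference} and one coming from the Hermitian symmetry of the curvature. Solving the resulting linear relation for $R_{i\bar j k\bar\ell}$ and substituting the value of $\widehat R$ from \eqref{eq:HSC-polarization} should then give \eqref{eq:curvature-reconstruction}. This is pure linear algebra at a point in a fixed unitary frame. The global constancy of $c$ is not needed here; only $H\equiv c$ pointwise, through \eqref{eq:HSC-polarization}, and $\nabla T=0$, through Lemma~\ref{lem:curvature-difference}, enter.

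\emph{Step 1 (swapping the holomorphic slots).} Lemma~\ref{lem:curvature-difference} gives
\[
R_{k\bar j i\bar\ell}=R_{i\bar j k\bar\ell}-Q_{i\bar j k\bar\ell}.
\]

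\emph{Step 2 (swapping the antiholomorphic slots).} Since $\nabla$ is Hermitian, $R_{i\bar j k\bar\ell}=\overline{R_{j\bar i\ell\bar k}}$. Applying this to both $R_{i\bar j k\bar\ell}$ and $R_{i\bar\ell k\bar j}=\overline{R_{\ell\bar i j\bar k}}$ gives
\[
R_{i\bar j k\bar\ell}-R_{i\bar\ell k\bar j}=\overline{R_{j\bar i\ell\bar k}-R_{\ell\bar i j\bar k}}.
\]
By Lemma~\ref{lem:curvature-difference}, applied with indices $(j,i,\ell,k)$, the right-hand side equals $\overline{Q_{j\bar i\ell\bar k}}$. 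Hence
\[
R_{i\bar\ell k\bar j}=R_{i\bar j k\bar\ell}-\overline{Q_{j\bar i\ell\bar k}}.
\]

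\emph{Step 3 (the doubly swapped term).} For the fourth term of $\widehat R$, first apply Step~1 to $R_{i\bar\ell k\bar j}$ (with $\ell$ and $j$ interchanged), then use Step~2:
\[
R_{k\bar\ell i\bar j}=R_{i\bar\ell k\bar j}-Q_{i\bar\ell k\bar j}=R_{i\bar j k\bar\ell}-\overline{Q_{j\bar i\ell\bar k}}-Q_{i\bar\ell k\bar j}.
\]

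\emph{Step 4 (solving for $R$).} Summing the four terms in the definition of $\widehat R$ gives
\[
4\widehat R_{i\bar j k\bar\ell}=4R_{i\bar j k\bar\ell}-Q_{i\bar j k\bar\ell}-2\overline{Q_{j\bar i\ell\bar k}}-Q_{i\bar\ell k\bar j}.
\]
Solving for $R_{i\bar j k\bar\ell}$ and inserting $\widehat R_{i\bar j k\bar\ell}=\frac c2(\delta_{ij}\delta_{k\ell}+\delta_{i\ell}\delta_{kj})$ from \eqref{eq:HSC-polarization} yields exactly \eqref{eq:curvature-reconstruction}. Because every identity used is tensorial, the formula holds in any local unitary frame.

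The only delicate point is index bookkeeping in Steps~2 and~3. One must check that the conjugation identity is applied with the correct index permutation, so that the term that appears is $\overline{Q_{j\bar i\ell\bar k}}$, with coefficient $\tfrac12$ because it is produced twice. One must also check that the second application of Lemma~\ref{lem:curvature-difference} produces $Q_{i\bar\ell k\bar j}$ rather than some other permutation. I would verify these by writing each substitution explicitly, as above.
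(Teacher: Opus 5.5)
Your proposal is correct and follows the paper's proof: it uses the same two direct instances of Lemma~\ref{lem:curvature-difference}, the same conjugated instance with indices $(j,i,\ell,k)$ giving $R_{i\bar j k\bar\ell}-R_{i\bar\ell k\bar j}=\overline{Q_{j\bar i\ell\bar k}}$, and the same substitution into $4\widehat R_{i\bar j k\bar\ell}$. Your remark that only pointwise constancy of $H$ enters this step is also accurate; global constancy is used only later, in Theorem~\ref{thm:partial-AS}.
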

\begin{proof}
By Lemma~\ref{lem:curvature-difference},
$$
R_{i\bar j k\bar\ell}-R_{k\bar j i\bar\ell}
=Q_{i\bar j k\bar\ell},
\qquad
R_{i\bar\ell k\bar j}-R_{k\bar\ell i\bar j}
=Q_{i\bar\ell k\bar j}.
$$
Applying the same lemma to $(e_j,\bar e_i,e_\ell)$, taking complex
conjugates, and using
$\overline{R_{j\bar i\ell\bar k}}=R_{i\bar j k\bar\ell}$, we also get
$$
R_{i\bar j k\bar\ell}-R_{i\bar\ell k\bar j}
=\overline{Q_{j\bar i\ell\bar k}}.
$$
Notice that no K\"ahler-type symmetry is assumed, so the conjugated term cannot in general be replaced by $Q_{i\bar j k\bar\ell}$.

Using these three identities in the symmetrization of $R$, we obtain
$$
4\widehat R_{i\bar j k\bar\ell}
=
4R_{i\bar j k\bar\ell}
-Q_{i\bar j k\bar\ell}
-2\overline{Q_{j\bar i\ell\bar k}}
-Q_{i\bar\ell k\bar j}.
$$
Now \eqref{eq:HSC-polarization} gives
$$
4R_{i\bar j k\bar\ell}
-Q_{i\bar j k\bar\ell}
-2\overline{Q_{j\bar i\ell\bar k}}
-Q_{i\bar\ell k\bar j}
=
2c(\delta_{ij}\delta_{k\ell}+\delta_{i\ell}\delta_{kj}),
$$
which is equivalent to \eqref{eq:curvature-reconstruction}.
\end{proof}

\begin{proof}[Proof of Theorem~\ref{thm:partial-AS}]
By Proposition~\ref{prop:curvature-reconstruction}, with respect to a local unitary frame we have
\begin{equation}\label{eq:parallel-curvature-proof}
\begin{split}
R_{i\bar j k\bar\ell}
={}&\frac c2
\bigl(\delta_{ij}\delta_{k\ell}
+\delta_{i\ell}\delta_{kj}\big)
+\frac14Q_{i\bar j k\bar\ell}+\frac12\overline{Q_{j\bar i\ell\bar k}}
+\frac14Q_{i\bar\ell k\bar j}.
\end{split}
\end{equation}

By \eqref{eq:Q-def}, $\mathcal Q$ is obtained algebraically from $T$ by tensor products, contractions, and the projection $\pi^{1,0}=\frac12(I-\sqrt{-1}J)$. The assumptions $\nabla T=0$ and $\nabla J=0$ therefore imply $\nabla\mathcal Q=0$. Since $\nabla g=0$, taking the metric component of $\mathcal Q$ gives $\nabla Q=0$, and complex conjugation gives $\nabla\overline Q=0$.

The constant $c$ is global, so $dc=0$. Since $\nabla g=0$, the first term on
the right-hand side of \eqref{eq:parallel-curvature-proof} is
$\nabla$-parallel as well. Consequently every term on the right-hand side
of \eqref{eq:parallel-curvature-proof} is $\nabla$-parallel. Hence
$\nabla(R^{1,1})=0$.

If $R$ is of type $(1,1)$, then $R=R^{1,1}$ and hence $\nabla R=0$. Together with $\nabla T=0$, this shows that both the
torsion and curvature of $\nabla$ are parallel. Thus $\nabla$ is an Ambrose--Singer connection.
\end{proof}

\section{Holonomy decomposition and normalized block Ricci forms}\label{sec:block}

Throughout this section, $\nabla T=0$ and $H$ is globally constant, say $H\equiv c$. Hence Theorem~\ref{thm:partial-AS} is available and the $(1,1)$-part of $R$ is $\nabla$-parallel.

Fix $p\in M$. Let $\Hol_p(\nabla)$ denote the holonomy group of $\nabla$ at $p$, and let $\Hol_p^0(\nabla)$ denote its identity component, equivalently, the restricted holonomy group generated by parallel transports along contractible loops based at $p$. Let $G=\overline{\Hol_p^0(\nabla)}\subset U(T_p^{1,0}M)$. Then $G$ is a compact connected Lie group. Since the holonomy representation is continuous, $G$ and $\Hol_p^0(\nabla)$ have the same invariant subspaces. Thus passing to the closure does not affect the irreducible decomposition used
below.

Choose an orthogonal decomposition into irreducible complex $G$-modules,
\begin{equation}\label{eq:holonomy-decomposition-point}
T_p^{1,0}M=V_1\oplus\cdots\oplus V_N.
\end{equation}
Equivalent irreducible summands are allowed. On a simply connected neighborhood of $p$, parallel transport extends every $V_\alpha$ to a $\nabla$-parallel subbundle, again denoted by $V_\alpha$. Its orthogonal projection $\pi_\alpha:T^{1,0}M\to V_\alpha$ is $\nabla$-parallel.

Viewing $T$ as a complex bilinear map, we define a \emph{block component} of the complexified torsion to be a projection of one of the forms
$$
\pi_k\circ T|_{V_i\otimes V_j}:V_i\otimes V_j\to V_k,\qquad
\pi_k\circ T|_{V_i\otimes\bar V_j}:V_i\otimes\bar V_j\to V_k,\qquad
\bar\pi_k\circ T|_{V_i\otimes\bar V_j}:V_i\otimes\bar V_j\to\bar V_k,
$$
together with their complex conjugates, where $\bar\pi_k$ denotes the
orthogonal projection onto $\bar V_k$. Since $T$ and all block
projections are $\nabla$-parallel, every block component is
$\nabla$-parallel and hence $G$-equivariant.

Put $n_\alpha=\operatorname{rank}_{\C}V_\alpha$. Define the \emph{normalized block Ricci form} by
\begin{equation*}
\rho_\alpha(X,\bar Y)=\frac1{n_\alpha}\tr_{V_\alpha}R(X,\bar Y).
\end{equation*}
Since $R^{1,1}$ and the block projections are $\nabla$-parallel, Theorem~\ref{thm:partial-AS} implies that every $\rho_\alpha$ is $\nabla$-parallel.

\begin{lemma}\label{lem:block-scalar}
For every pair $\alpha,\beta$, the restriction of $\rho_\beta$ to $V_\alpha$ is a real multiple of the Hermitian metric. More precisely, there exists a real constant $r_{\alpha\beta}$ such that
$$\rho_\beta|_{V_\alpha}=r_{\alpha\beta}\,g|_{V_\alpha}.$$
Equivalently, if $x\in V_\alpha$ is any unit vector and $\{u_1,\ldots,u_{n_\beta}\}$ is a unitary basis of $V_\beta$, then
$$
r_{\alpha\beta}:=\rho_\beta(x,\bar x)=\frac1{n_\beta}\sum_{a=1}^{n_\beta}R_{x\bar x u_a\bar u_a}.
$$
This number is independent of the choices of $x$ and of the unitary basis of $V_\beta$. Moreover, if $\alpha\neq\beta$, then
\begin{equation}\label{eq:cross-block-Ricci}
r_{\alpha\beta}+r_{\beta\alpha}=2c.
\end{equation}
\end{lemma}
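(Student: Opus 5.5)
The plan has two parts. First we use Schur to show that $\rho_\beta$ restricted to $V_\alpha$ is a constant multiple of the metric. Then we obtain the cross-block relation from the symmetrization identity \eqref{eq:HSC-polarization}.

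\emph{Scalar part.} By Theorem~\ref{thm:partial-AS}, $R^{1,1}$ is $\nabla$-parallel, and so is $\pi_\beta$. Hence the form $\rho_\beta(X,\bar Y)=\frac1{n_\beta}\tr(\pi_\beta\circ R(X,\bar Y)|_{V_\beta})$ is a $\nabla$-parallel section of $\Lambda^{1,0}\otimes\Lambda^{0,1}$. At $p$ it is therefore invariant under $\Hol^0_p(\nabla)$, and by continuity under its closure $G$. The form is Hermitian: from $R_{i\bar j k\bar\ell}=\overline{R_{j\bar i\ell\bar k}}$, summing over $k=\ell$ in a unitary basis of $V_\beta$ gives $\rho_\beta(X,\bar Y)=\overline{\rho_\beta(Y,\bar X)}$. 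Restrict it to the irreducible $G$-module $V_\alpha$ and apply Lemma~\ref{lem:Schur}. This gives $\rho_\beta|_{V_\alpha}=r_{\alpha\beta}\,g|_{V_\alpha}$ with $r_{\alpha\beta}\in\R$ at $p$.

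Since $\rho_\beta$ and $g$ are parallel and $V_\alpha$ is parallel, this identity propagates along curves on the simply connected neighborhood. So $r_{\alpha\beta}$ is locally constant, which is all that is needed. Evaluating at a unit vector $x$ gives the stated formula. The independence from the choice of $x$ is exactly the scalar statement. The independence from the unitary basis of $V_\beta$ holds because we are taking a trace.

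\emph{Cross-block relation.} Take $\alpha\neq\beta$, a unit vector $x\in V_\alpha$, and a unitary basis $u_a$ of $V_\beta$. Apply \eqref{eq:HSC-polarization} with $i=j$ corresponding to $x$ and $k=\ell$ corresponding to $u_a$; this is legitimate because $x\perp u_a$ and the identity holds in any unitary frame. It gives
$$
R_{x\bar x u_a\bar u_a}+R_{u_a\bar x x\bar u_a}+R_{x\bar u_a u_a\bar x}+R_{u_a\bar u_a x\bar x}=2c.
$$
Average over $a$. The first and last terms produce $r_{\alpha\beta}+r_{\beta\alpha}$; for the last term we use that $r_{\beta\alpha}=\rho_\alpha(u_a,\bar u_a)$ for every unit $u_a\in V_\beta$.

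It remains to show that the two middle terms vanish. Since $\pi_\alpha$ and $\pi_\beta$ are parallel, each curvature endomorphism $R(X,\bar Y)$ lies in the holonomy algebra and preserves every $V_\gamma$. Therefore $R_{u_a\bar x x\bar u_a}=\langle R(u_a,\bar x)x,\bar u_a\rangle=0$, since $R(u_a,\bar x)x\in V_\alpha\perp V_\beta$. Similarly $R_{x\bar u_a u_a\bar x}=0$. This leaves $r_{\alpha\beta}+r_{\beta\alpha}=2c$.

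The main subtlety is justifying that the curvature endomorphisms preserve each $V_\gamma$. Ambrose--Singer gives this, since parallel subbundles are invariant under curvature. Equivalently, $R(X,\bar Y)$ commutes with the parallel projections because $\nabla\pi_\gamma=0$ forces $[R,\pi_\gamma]=0$. I expect everything else to be routine.
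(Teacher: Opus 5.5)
Your argument is the paper's proof: parallelism of $R^{1,1}$ and $\pi_\beta$ together with Schur gives the scalar part, and \eqref{eq:HSC-polarization} on an orthonormal pair from $V_\alpha$ and $V_\beta$, with the two middle terms killed by block preservation, gives the cross-block relation. However, one step in the averaging is misjustified.

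If you average over $a$ alone with $x$ fixed, the last term becomes $\frac1{n_\beta}\sum_a R_{u_a\bar u_a x\bar x}$. This is a trace over $V_\beta$ in the \emph{first} pair of slots, evaluated at the single vector $x$. It is not $\rho_\alpha(u_a,\bar u_a)=\frac1{n_\alpha}\sum_i R_{u_a\bar u_a e_i\bar e_i}$, which traces over $V_\alpha$ in the \emph{second} pair of slots. In general the two differ termwise unless $n_\alpha=1$. So the fact $r_{\beta\alpha}=\rho_\alpha(u_a,\bar u_a)$ does not by itself identify the averaged last term.

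The fix is one line. Your identity $R_{x\bar x u_a\bar u_a}+R_{u_a\bar u_a x\bar x}=2c$ holds for every unit $x\in V_\alpha$. Sum it over a unitary basis $\{e_i\}$ of $V_\alpha$ as well as over $a$, as the paper does. Then $\frac1{n_\alpha n_\beta}\sum_{i,a}R_{e_i\bar e_i u_a\bar u_a}=r_{\alpha\beta}$ and $\frac1{n_\alpha n_\beta}\sum_{i,a}R_{u_a\bar u_a e_i\bar e_i}=\frac1{n_\beta}\sum_a\rho_\alpha(u_a,\bar u_a)=r_{\beta\alpha}$.

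Alternatively, your single-$x$ identity already shows that the $a$-averaged last term equals $2c-r_{\alpha\beta}$. It is therefore independent of $x$, and averaging it over $x=e_i$ identifies it with $r_{\beta\alpha}$.
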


\begin{proof}
By Theorem~\ref{thm:partial-AS}, $R^{1,1}$ is $\nabla$-parallel.
Since $V_\beta$ is $\nabla$-parallel, so is $\rho_\beta$. Hence
$\rho_\beta|_{V_\alpha}$ is a holonomy-invariant Hermitian form on
the irreducible block $V_\alpha$. By Lemma~\ref{lem:Schur},
$$\rho_\beta|_{V_\alpha} =r_{\alpha\beta}\,g|_{V_\alpha}$$
for some real constant $r_{\alpha\beta}$. The trace formula in the
statement follows directly from the definition of $\rho_\beta$ and
is independent of the choice of a unitary basis of $V_\beta$.

Now assume $\alpha\neq\beta$. Choose unitary bases
$\{e_1,\ldots,e_{n_\alpha}\}$ of $V_\alpha$ and
$\{u_1,\ldots,u_{n_\beta}\}$ of $V_\beta$. Since the curvature
preserves each parallel block and $V_\alpha\perp V_\beta$,
$$R_{e_i\bar u_a u_a\bar e_i} =R_{u_a\bar e_i e_i\bar u_a}=0.$$
Applying \eqref{eq:HSC-polarization} to $e_i$ and $u_a$ gives
$$R_{e_i\bar e_i u_a\bar u_a} +R_{u_a\bar u_a e_i\bar e_i}=2c.$$
Summing over $i$ and $a$ and using the definition of the normalized block Ricci forms yields
$$r_{\alpha\beta}+r_{\beta\alpha}=2c,$$
which is \eqref{eq:cross-block-Ricci}.
\end{proof}

We next state the block relations induced by a parallel block component.

\begin{lemma}\label{lem:block-Ricci-relations}
Let $E,F,W$ be irreducible parallel holonomy subbundles.
\begin{enumerate}
\item If a nonzero block component gives a map $\Phi:E\otimes F\to W$, then $\rho_W=\rho_E+\rho_F$.
\item If a nonzero block component gives a map $\Phi:E\otimes\bar F\to W$, then $\rho_W=\rho_E-\rho_F$.
\item If a nonzero block component gives a map $\Phi:E\otimes\bar F\to\bar W$, then $\rho_W=\rho_F-\rho_E$.
\item If a nonzero block component gives $\Phi:\Lambda^2E\to W$, then $\rho_W=2\rho_E$.
\end{enumerate}
\end{lemma}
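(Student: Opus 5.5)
The plan is to turn the parallelism of $\Phi$ into an infinitesimal equivariance identity under the holonomy algebra, and then compare normalized traces. The traces kill the semisimple part, and the centre acts by scalars by Schur.

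Let $\mathfrak g\subset\mathfrak u(T_p^{1,0}M)$ be the Lie algebra of $G=\overline{\Hol_p^0(\nabla)}$. By the Ambrose--Singer holonomy theorem, every curvature endomorphism $R(X,Y)$ at $p$ with $X,Y$ real lies in $\mathfrak g$. Hence $R(X,\bar Y)$, with $X,Y\in T^{1,0}_pM$, lies in the complexification $\mathfrak g_\C$. Each parallel block $V_\alpha$ is $\mathfrak g$-invariant. On it define the complex-linear functional
$$\chi_\alpha(A)=\frac1{n_\alpha}\tr_{V_\alpha}A,\qquad A\in\mathfrak g_\C .$$
By definition, $\rho_\alpha(X,\bar Y)=\chi_\alpha(R(X,\bar Y))$. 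So it suffices to prove the four relations at the level of functionals, for example $\chi_W=\chi_E+\chi_F$ on $\mathfrak g$, and then extend complex-linearly to $\mathfrak g_\C$.

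Since $\Phi$ is $\nabla$-parallel, it is $G$-equivariant. Differentiating gives, for $A\in\mathfrak g$,
$$A\,\Phi(e\otimes f)=\Phi(Ae\otimes f)+\Phi(e\otimes Af).$$
On conjugate factors, $A$ acts by its conjugate. Since $\mathfrak g$ is compact, $\mathfrak g=\mathfrak z\oplus[\mathfrak g,\mathfrak g]$.

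\emph{Commutator part.} For $A\in[\mathfrak g,\mathfrak g]$, every trace on an invariant subspace vanishes. Hence all the $\chi$'s vanish there, and every relation holds trivially.

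\emph{Central part.} For $A\in\mathfrak z$, Lemma~\ref{lem:Schur} shows that $A$ acts on each irreducible block $V_\alpha$ by a scalar, necessarily $\sqrt{-1}\lambda_\alpha$ with $\lambda_\alpha$ real because $A$ is skew-Hermitian. Thus $\chi_\alpha(A)=\sqrt{-1}\lambda_\alpha$. On $\bar V_\alpha$ the scalar is the conjugate, $-\sqrt{-1}\lambda_\alpha=-\chi_\alpha(A)$. Choose $e\otimes f$ with $\Phi(e\otimes f)\neq0$. The equivariance identity then gives the four relations:
\begin{enumerate}
\item $\chi_W=\chi_E+\chi_F$ for $E\otimes F\to W$;
\item $\chi_W=\chi_E-\chi_F$ for $E\otimes\bar F\to W$;
\item $-\chi_W=\chi_E-\chi_F$ for $E\otimes\bar F\to\bar W$;
\item $\chi_W=2\chi_E$ for $\Lambda^2E\to W$, taking $e\wedge e'$ in place of $e\otimes f$.
\end{enumerate}

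\emph{Passing to curvature.} These identities hold on $\mathfrak z\oplus[\mathfrak g,\mathfrak g]=\mathfrak g$. The functionals are complex linear, so they extend to $\mathfrak g_\C$. Evaluating at $A=R(X,\bar Y)$ yields the four claimed relations among the $\rho$'s.

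The main point needing care is the use of $\mathfrak g_\C$. The endomorphisms $R(X,\bar Y)$ are not skew-Hermitian, so the scalar argument must be run on the real algebra first and only then extended. One must also check that the action on conjugate blocks extends complex-linearly, so that $\chi_{\bar F}=-\chi_F$ persists on $\mathfrak g_\C$. A secondary point is that $\Phi$ is only assumed nonzero, not an isomorphism, which is why we compare scalars rather than take traces of $\Phi$ directly.
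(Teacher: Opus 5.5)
Your argument is correct, but it takes a different route from the paper's. The paper never passes to the holonomy algebra. From $\nabla\Phi=0$ it takes the Ricci commutation identity $R(X,\bar Y)|_W\,\Phi=\Phi\bigl(R(X,\bar Y)|_E\otimes I+I\otimes R(X,\bar Y)|_F\bigr)$, multiplies on the right by $\Phi^*$, and takes traces. Lemma~\ref{lem:Schur} is used three times: once to get $\Phi\Phi^*=\lambda I_W$ with $\lambda>0$ (by surjectivity onto the irreducible $W$), and once each for the partial traces $\tr_F(\Phi^*\Phi)$ and $\tr_E(\Phi^*\Phi)$. Their values $\lambda m_W/m_E$ and $\lambda m_W/m_F$ produce exactly the normalizations in $\rho$, and conjugate blocks are handled via $\bar F\cong F^*$.

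You replace this trace bookkeeping by evaluation on a single decomposable tensor with $\Phi(e\otimes f)\neq0$. That works only because you first reduce to central elements, which act by scalars. The price is the Ambrose--Singer theorem (to put $R(X,\bar Y)$ in $\mathfrak g_\C$) and the splitting $\mathfrak g=\mathfrak z\oplus[\mathfrak g,\mathfrak g]$. The paper's proof needs neither, since its Ricci identity holds directly at every point.

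In exchange you get a stronger and more conceptual statement. The relations hold between normalized-trace functionals on all of $\mathfrak g_\C$, and each $\rho_\alpha$ sees only the $\mathfrak z_\C$-component of the curvature. The four relations are then just the differentials of the central-character relations \eqref{eq:central-character-relations} that the paper introduces separately in Section~\ref{sec:repeated-components}.

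Two small remarks. First, your worry about extending the conjugate action to $\mathfrak g_\C$ is unnecessary. The relations you derive on the real algebra $\mathfrak g$, for instance $\chi_W=\chi_E-\chi_F$, involve only traces on blocks of $T^{1,0}$, and these are complex-linear in $A$. Second, to apply the group form of Lemma~\ref{lem:Schur} to $A\in\mathfrak z$, you should note that $A$ commutes with all of $G$ because $G$ is connected.
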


\begin{proof}
We prove the first assertion in detail. Put $m_E=\operatorname{rank}_{\C}E$, $m_F=\operatorname{rank}_{\C}F$, and $m_W=\operatorname{rank}_{\C}W$. Since $\Phi$ is parallel, it is equivariant with respect to the holonomy action. Hence $\im\Phi\subset W$ is a holonomy-invariant subspace. Since $W$ is irreducible and $\Phi\neq0$, we have $\im\Phi=W$.

The endomorphism $\Phi\Phi^*:W\to W$ commutes with holonomy. By Lemma~\ref{lem:Schur},
$\Phi\Phi^*=\lambda I_W$
for some $\lambda\in\R$. Since $\Phi$ is surjective, $\Phi\Phi^*$ is positive definite, and hence $\lambda>0$.

The two partial traces $\tr_F(\Phi^*\Phi)\in\End(E)$ and $\tr_E(\Phi^*\Phi)\in\End(F)$ also commute with holonomy. Applying Lemma~\ref{lem:Schur} gives
$$\tr_F(\Phi^*\Phi)=\mu I_E,\qquad \tr_E(\Phi^*\Phi)=\nu I_F$$
for some real numbers $\mu$ and $\nu$. Taking traces and using
$\tr(\Phi^*\Phi)=\tr(\Phi\Phi^*)=\lambda m_W$, we obtain
$m_E\mu=\lambda m_W$ and $m_F\nu=\lambda m_W$. Therefore
$$
\tr_F(\Phi^*\Phi)=\frac{\lambda m_W}{m_E}I_E,\qquad
\tr_E(\Phi^*\Phi)=\frac{\lambda m_W}{m_F}I_F.
$$

For any $X,Y\in T^{1,0}M$, the equality $\nabla\Phi=0$ and the Ricci commutation formula give
$$
R(X,\bar Y)|_W\,\Phi
=\Phi\big(R(X,\bar Y)|_E\otimes I+I\otimes R(X,\bar Y)|_F\big).
$$
Right-multiplying by $\Phi^*$ and taking traces, the cyclicity of trace and the two partial-trace identities above yield
$$
\lambda\tr_W R(X,\bar Y)
=\frac{\lambda m_W}{m_E}\tr_E R(X,\bar Y)
+\frac{\lambda m_W}{m_F}\tr_F R(X,\bar Y).
$$
Dividing by $\lambda m_W$ gives
$$
\frac1{m_W}\tr_W R(X,\bar Y)
=\frac1{m_E}\tr_E R(X,\bar Y)
+\frac1{m_F}\tr_F R(X,\bar Y),
$$
which is precisely $\rho_W=\rho_E+\rho_F$.

For the second assertion, identify the conjugate unitary representation $\bar F$ with the dual representation $F^*$. The induced curvature on $F^*$ is the negative transpose of the curvature on $F$, and hence $\rho_{\bar F}=-\rho_F$. Applying the first assertion to $\Phi:E\otimes\bar F\to W$ gives $\rho_W=\rho_E-\rho_F$.

If instead $\Phi:E\otimes\bar F\to\bar W$, then $\rho_{\bar W}=-\rho_W$, and therefore $-\rho_W=\rho_E-\rho_F$, or equivalently, $\rho_W=\rho_F-\rho_E$. Finally, for $\Phi:\Lambda^2E\to W$, regard $\Phi$ as a skew-symmetric map on
$E\otimes E$. Since the curvature acts on the two copies of $E$
separately, the same trace computation gives two identical contributions. Hence $\rho_W=2\rho_E$.
\end{proof}

Let $\varepsilon_1,\ldots,\varepsilon_N$ denote the standard basis of $\R^N$. We call $a=(a_1,\ldots,a_N)\in\R^N$ a \emph{relation vector} if $\sum_\alpha a_\alpha\rho_\alpha=0$. Thus, for example, the relation $\rho_k=\rho_i+\rho_j$ has relation vector $\varepsilon_i+\varepsilon_j-\varepsilon_k$, while $\rho_j=2\rho_i$ has relation vector $2\varepsilon_i-\varepsilon_j$.

We now derive the main quadratic identity. Put $d_\alpha=r_{\alpha\alpha}$.

\begin{lemma}\label{lem:quadratic}
Let $a=(a_1,\ldots,a_N)\in\R^N$ be a relation vector. Then
\begin{equation}\label{eq:quadratic}
c\left(\sum_\alpha a_\alpha\right)^2
+\sum_\alpha(d_\alpha-c)a_\alpha^2=0.
\end{equation}
Moreover, if $b=(b_1,\ldots,b_N)\in\R^N$ is another relation
vector, then
\begin{equation}\label{eq:quadratic-polarized}
c\left(\sum_\alpha a_\alpha\right)
\left(\sum_\alpha b_\alpha\right)
+\sum_\alpha(d_\alpha-c)a_\alpha b_\alpha=0.
\end{equation}
\end{lemma}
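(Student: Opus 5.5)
Since $\sum_\alpha a_\alpha\rho_\alpha=0$ as a form on $T^{1,0}M$, I will evaluate it on unit vectors in each block and then combine the results using Lemma~\ref{lem:block-scalar}. The polarized identity \eqref{eq:quadratic-polarized} will follow from \eqref{eq:quadratic} by bilinearity. Indeed, relation vectors form a linear subspace, so $a+b$ is a relation vector whenever $a$ and $b$ are. The left side of \eqref{eq:quadratic} is a quadratic form $q(a)$, and $q(a+b)-q(a)-q(b)=0$ gives exactly twice \eqref{eq:quadratic-polarized}. So the main task is \eqref{eq:quadratic}.

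Fix a block $V_\beta$ and restrict the relation to $V_\beta$. By Lemma~\ref{lem:block-scalar}, $\rho_\alpha|_{V_\beta}=r_{\beta\alpha}g$, so the relation gives $\sum_\alpha a_\alpha r_{\beta\alpha}=0$ for every $\beta$. Multiply by $a_\beta$ and sum over $\beta$:
$$\sum_{\alpha,\beta}a_\alpha a_\beta r_{\beta\alpha}=0.$$
Split this double sum into diagonal and off-diagonal parts. The diagonal terms contribute $\sum_\alpha d_\alpha a_\alpha^2$. For the off-diagonal terms, pair $(\alpha,\beta)$ with $(\beta,\alpha)$ and use \eqref{eq:cross-block-Ricci}, $r_{\alpha\beta}+r_{\beta\alpha}=2c$. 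This gives
$$\sum_{\alpha\neq\beta}a_\alpha a_\beta r_{\beta\alpha}=\tfrac12\sum_{\alpha\ne\beta}a_\alpha a_\beta(r_{\beta\alpha}+r_{\alpha\beta})=c\sum_{\alpha\neq\beta}a_\alpha a_\beta.$$
Then substitute $\sum_{\alpha\ne\beta}a_\alpha a_\beta=(\sum_\alpha a_\alpha)^2-\sum_\alpha a_\alpha^2$. Adding the diagonal part yields $c(\sum_\alpha a_\alpha)^2+\sum_\alpha(d_\alpha-c)a_\alpha^2=0$, which is \eqref{eq:quadratic}.

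The only point needing care is the use of Lemma~\ref{lem:block-scalar}. Each $\rho_\alpha$ is a Hermitian form, so restricting to $V_\beta$ only captures its diagonal blocks. That is enough here, because we only need the scalar $\rho_\alpha(x,\bar x)=r_{\beta\alpha}$ for a unit vector $x\in V_\beta$. We do not need to know whether off-diagonal blocks of $\rho_\alpha$ vanish. The argument is a few lines of bookkeeping, and the only thing to watch is the index order in $r$: the first index is the block the vector lies in, the second is the trace block.
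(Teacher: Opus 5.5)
Your proposal is correct and follows the paper's proof essentially step for step. You restrict the relation to each block to get $\sum_\alpha a_\alpha r_{\beta\alpha}=0$, contract with $a_\beta$, and split into diagonal and off-diagonal parts using $r_{\alpha\beta}+r_{\beta\alpha}=2c$; you then obtain the polarized identity from $a+b$ being a relation vector, which is exactly how the paper argues.
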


\begin{proof}
Since $a$ is a relation vector, $\sum_\alpha a_\alpha\rho_\alpha=0$. Restricting to $V_\gamma$ and using Lemma~\ref{lem:block-scalar} gives $\sum_\alpha a_\alpha r_{\gamma\alpha}=0$ for every $\gamma$. Multiplying by $a_\gamma$, summing in $\gamma$, and separating diagonal and off-diagonal terms, we obtain
$$
0=\sum_{\gamma,\alpha}a_\gamma a_\alpha r_{\gamma\alpha}
 =\sum_\alpha r_{\alpha\alpha}a_\alpha^2
 +\sum_{\alpha<\beta}a_\alpha a_\beta
   (r_{\alpha\beta}+r_{\beta\alpha}).
$$
By the definition $d_\alpha=r_{\alpha\alpha}$ and
\eqref{eq:cross-block-Ricci},
$r_{\alpha\beta}+r_{\beta\alpha}=2c$ for $\alpha\neq\beta$.
Therefore
$$
0=
\sum_\alpha d_\alpha a_\alpha^2
+2c\sum_{\alpha<\beta}a_\alpha a_\beta.
$$
Using
$$
2\sum_{\alpha<\beta}a_\alpha a_\beta
=
\left(\sum_\alpha a_\alpha\right)^2
-\sum_\alpha a_\alpha^2,
$$
we obtain
$$
0=
c\left(\sum_\alpha a_\alpha\right)^2
+\sum_\alpha(d_\alpha-c)a_\alpha^2,
$$
which proves \eqref{eq:quadratic}.

Now let $b=(b_1,\ldots,b_N)$ be another relation vector. Since
the space of relation vectors is linear, $a+b$ is also a relation
vector. Applying \eqref{eq:quadratic} to $a+b$, and subtracting the
corresponding identities for $a$ and $b$, we obtain
$$
\begin{aligned}
0={}&
2c\left(\sum_\alpha a_\alpha\right)
\left(\sum_\alpha b_\alpha\right)
+2\sum_\alpha(d_\alpha-c)a_\alpha b_\alpha.
\end{aligned}
$$
Dividing by $2$ gives \eqref{eq:quadratic-polarized}.
\end{proof}

A block $V_\alpha$ is called \emph{clean} if $T(V_\alpha,V_\alpha)=0$ and $T(V_\alpha,\bar V_\alpha)=0$.

\begin{lemma}\label{lem:clean-block}
If $V_\alpha$ is clean, then
\begin{equation}\label{eq:clean-diagonal}
d_\alpha=\frac{n_\alpha+1}{2n_\alpha}c.
\end{equation}
\end{lemma}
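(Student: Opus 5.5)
The plan is to compute $d_\alpha$ directly from the curvature reconstruction formula in Proposition~\ref{prop:curvature-reconstruction}. The point is that cleanliness of $V_\alpha$ kills every torsion correction term $Q$ whose four indices all lie in $V_\alpha$. The only remaining step is an elementary trace of the constant-curvature model tensor.

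First, choose a local unitary frame $\{e_1,\ldots,e_n\}$ adapted to the parallel splitting \eqref{eq:holonomy-decomposition-point}. Arrange that $\{e_1,\ldots,e_{n_\alpha}\}$ is a unitary frame of $V_\alpha$; this is possible because the blocks are orthogonal.

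Next, I check that $\mathcal Q(X,\bar Y,Z)=0$ whenever $X,Y,Z\in V_\alpha$. Each of the three terms in \eqref{eq:Q-def} is a torsion applied to an inner torsion value.
\begin{itemize}
\item The inner values $T(X,\bar Y)$ and $T(\bar Y,Z)=-T(Z,\bar Y)$ vanish as full complex vectors, because $T(V_\alpha,\bar V_\alpha)=0$.
\item The inner value $T(Z,X)$ vanishes because $T(V_\alpha,V_\alpha)=0$.
\end{itemize}
Hence all three terms vanish, so $\mathcal Q(X,\bar Y,Z)=0$ and its $(1,0)$-projection is zero. Consequently $Q_{i\bar j k\bar\ell}=0$ whenever $i,j,k\le n_\alpha$, for every $\ell$.

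The same vanishing covers every $Q$-term appearing in \eqref{eq:curvature-reconstruction} when all four indices $i,j,k,\ell$ lie in $\{1,\ldots,n_\alpha\}$. The term $\overline{Q_{j\bar i\ell\bar k}}$ has its first three slots $j,i,\ell$ in $V_\alpha$, and $Q_{i\bar\ell k\bar j}$ has $i,\ell,k$ in $V_\alpha$. Therefore, for such indices,
$$
R_{i\bar j k\bar\ell}=\frac c2\big(\delta_{ij}\delta_{k\ell}+\delta_{i\ell}\delta_{kj}\big).
$$

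Finally, I apply the trace formula of Lemma~\ref{lem:block-scalar} with $\beta=\alpha$, $x=e_1$ and $u_a=e_a$:
$$
d_\alpha=r_{\alpha\alpha}=\frac1{n_\alpha}\sum_{a=1}^{n_\alpha}R_{1\bar1 a\bar a}
=\frac1{n_\alpha}\sum_{a=1}^{n_\alpha}\frac c2\,(1+\delta_{1a})
=\frac{n_\alpha+1}{2n_\alpha}\,c,
$$
which is \eqref{eq:clean-diagonal}. The argument is pointwise and purely algebraic.

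The only step needing care is confirming that every $Q$-term in \eqref{eq:curvature-reconstruction} has its first three slots in $V_\alpha$, including the conjugated term. This is exactly why cleanliness must include both $T(V_\alpha,V_\alpha)=0$ and the full mixed-type condition $T(V_\alpha,\bar V_\alpha)=0$, not just its $(1,0)$-part.
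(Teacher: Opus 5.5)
Your proof is correct and is essentially the paper's argument. The paper applies the first Bianchi identity directly on the clean block to get $R(X,\bar Y)Z=R(Z,\bar Y)X$, hence the K\"ahler symmetries there, and then uses \eqref{eq:HSC-polarization} and contracts; you reach the same block formula by noting that cleanliness kills every $\mathcal Q$-term in Proposition~\ref{prop:curvature-reconstruction}, which is that same Bianchi-plus-polarization argument already packaged, and your final trace is identical.
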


\begin{proof}
Take $X,Z\in V_\alpha$ and $\bar Y\in\bar V_\alpha$ in the first Bianchi identity \eqref{eq:first-Bianchi-general}. Since $\nabla T=0$, the derivative terms vanish. The clean condition gives
$$T(X,Z)=0,\qquad T(X,\bar Y)=0,\qquad T(Z,\bar Y)=0,$$
and therefore every quadratic torsion term in the Bianchi identity vanishes. Taking the $(1,0)$-part, exactly as in the proof of Lemma~\ref{lem:curvature-difference}, yields $R(X,\bar Y)Z=R(Z,\bar Y)X$. Choose a unitary basis $\{e_1,\ldots,e_{n_\alpha}\}$ of $V_\alpha$. For $1\le i,j,k,\ell\le n_\alpha$, the above identity becomes $R_{i\bar j k\bar\ell}=R_{k\bar j i\bar\ell}$. Then $R_{i\bar j k\bar\ell}=R_{i\bar\ell k\bar j}$. The identity \eqref{eq:HSC-polarization} therefore gives, on this block,
$$
R_{i\bar j k\bar\ell}=\frac c2(\delta_{ij}\delta_{k\ell}+\delta_{i\ell}\delta_{kj}).
$$
Now contract $k$ and $\ell$ over a unitary basis of $V_\alpha$:
$$
\sum_{k=1}^{n_\alpha}R_{i\bar j k\bar k}
=\frac c2\sum_{k=1}^{n_\alpha}(\delta_{ij}+\delta_{ik}\delta_{kj})
=\frac{n_\alpha+1}{2}c\,\delta_{ij}.
$$
Dividing by $n_\alpha$ and using the definition of $\rho_\alpha$ gives
$$
\rho_\alpha|_{V_\alpha}=\frac{n_\alpha+1}{2n_\alpha}c\,g|_{V_\alpha}.
$$
Since $\rho_\alpha|_{V_\alpha}=d_\alpha g|_{V_\alpha}$, this gives \eqref{eq:clean-diagonal}.
\end{proof}

The following consequence of the Bianchi identity will be used to exclude pairs of vectors for which the relevant torsion terms vanish.

\begin{lemma}\label{lem:null-pair}
Let $V_\alpha$ and $V_\beta$ be distinct parallel blocks. Suppose that nonzero vectors $x\in V_\alpha$ and $y\in V_\beta$ satisfy
$$T(x,y)=T(x,\bar y)=T(y,\bar x)=T(x,\bar x)=T(y,\bar y)=0.$$
Then $c=0$.
\end{lemma}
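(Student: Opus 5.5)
The plan is to play the polarized holomorphic sectional curvature identity \eqref{eq:HSC-polarization}, applied to the pair $x,y$, against the torsion-twisted Bianchi relation of Lemma~\ref{lem:curvature-difference}. The vanishing hypotheses should kill every torsion contribution, which will force the cross curvature terms to vanish.

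\emph{Step 1 (HSC cross identity).} We may normalize $|x|=|y|=1$ and complete $x,y$ to a local unitary frame, which is possible since $V_\alpha\perp V_\beta$. Apply \eqref{eq:HSC-polarization} with $i=j=1$ (index of $x$) and $k=\ell=2$ (index of $y$). This gives
$$R_{x\bar x y\bar y}+R_{y\bar x x\bar y}+R_{x\bar y y\bar x}+R_{y\bar y x\bar x}=2c.$$
The blocks $V_\alpha,V_\beta$ are $\nabla$-parallel, so every curvature endomorphism preserves them. Hence $R(y,\bar x)x\in V_\alpha$ is orthogonal to $\bar y$, and $R(x,\bar y)y\in V_\beta$ is orthogonal to $\bar x$. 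The two middle terms therefore vanish, and we obtain
$$R_{x\bar x y\bar y}+R_{y\bar y x\bar x}=2c,$$
as already used in the proof of Lemma~\ref{lem:block-scalar}.

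\emph{Step 2 (Bianchi kills each cross term).} Apply Lemma~\ref{lem:curvature-difference} with $(X,\bar Y,Z)=(x,\bar y,y)$:
$$R(x,\bar y)y-R(y,\bar y)x=\mathcal Q(x,\bar y,y).$$
By \eqref{eq:Q-def}, $\mathcal Q(x,\bar y,y)$ is built from $T(T(x,\bar y),y)$, $T(T(\bar y,y),x)$ and $T(T(y,x),\bar y)$. The inner torsions are $T(x,\bar y)$, $T(\bar y,y)=-T(y,\bar y)$ and $T(y,x)=-T(x,y)$, and all of these vanish by hypothesis. Thus $\mathcal Q(x,\bar y,y)=0$. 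Pairing with $\bar x$ and using $R_{x\bar y y\bar x}=0$ from Step~1 gives $R_{y\bar y x\bar x}=0$.

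Symmetrically, take $(X,\bar Y,Z)=(y,\bar x,x)$. The inner torsions are now $T(y,\bar x)$, $T(\bar x,x)$ and $T(x,y)$, again all zero. So $R(y,\bar x)x=R(x,\bar x)y$, and pairing with $\bar y$ yields $R_{x\bar x y\bar y}=0$.

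\emph{Step 3.} Substituting both vanishings into Step~1 gives $2c=0$, so $c=0$.

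The only point needing care is bookkeeping: each of the five hypotheses should be used exactly where an inner torsion appears, and the type-preservation of curvature must be applied to the correct parallel block. I expect no genuine obstacle. Note that $T(x,\bar x)=0$ is needed only for the second application, and $T(y,\bar y)=0$ only for the first.
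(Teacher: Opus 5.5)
Your proof is correct and is essentially the paper's argument. The paper applies the first Bianchi identity to $(x,y,\bar y)$ and projects onto $V_\alpha$ to get $R_{y\bar y x\bar x}=0$, and symmetrically $R_{x\bar x y\bar y}=0$. You obtain the same vanishing from Lemma~\ref{lem:curvature-difference} plus block orthogonality, and both arguments then conclude $2c=0$ from \eqref{eq:HSC-polarization}.
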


\begin{proof}
Apply the first Bianchi identity \eqref{eq:first-Bianchi-general} to $(x,y,\bar y)$. Since $\nabla T=0$ and $T(x,y)=T(y,\bar y)=T(x,\bar y)=0$, all torsion terms vanish, and hence
$$R(x,y)\bar y+R(y,\bar y)x+R(\bar y,x)y=0.$$
Since curvature preserves every parallel block and its conjugate, the three terms belong to $\bar V_\beta$, $V_\alpha$, and $V_\beta$, respectively. Projecting onto $V_\alpha$ gives
$R(y,\bar y)x=0$, and therefore $R_{y\bar yx\bar x}=0$.
Interchanging $x$ and $y$ gives similarly $R_{x\bar xy\bar y}=0$.

Set $e_1=x/|x|$ and $e_2=y/|y|$. Since $V_\alpha\perp V_\beta$, these vectors are orthonormal and may be extended to a local unitary frame. Applying \eqref{eq:HSC-polarization} with the indices $1,1,2,2$ gives
$$
R_{e_1\bar e_1e_2\bar e_2}
+R_{e_2\bar e_1e_1\bar e_2}
+R_{e_1\bar e_2e_2\bar e_1}
+R_{e_2\bar e_2e_1\bar e_1}
=2c.
$$
The middle two terms vanish because curvature preserves the two
orthogonal blocks, while the first and last vanish by the previous Bianchi computations. Hence $2c=0$, and therefore $c=0$.
\end{proof}

\section{Torsion components with repeated block indices}\label{sec:repeated-components}

Throughout this section, we assume that $\nabla T=0$ and $H\equiv c$, with $c\neq0$. We use the holonomy decomposition and the normalized block Ricci forms introduced in Section~\ref{sec:block}. We first describe all possible block components of the torsion with a repeated underlying block index.

Recall that $G=\overline{\Hol_p^0(\nabla)}$ is compact and connected,
and that
$T_p^{1,0}M=V_1\oplus\cdots\oplus V_N$
is an orthogonal decomposition into irreducible $G$-modules. Let
$Z(G)$ denote the center of $G$. For each $z\in Z(G)$, Lemma~\ref{lem:Schur} gives
$z|_{V_\alpha}=\chi_\alpha(z)I_{V_\alpha}$
for some $\chi_\alpha(z)\in U(1)$. Thus
$\chi_\alpha:Z(G)\to U(1)$ is a unitary character, which we call the
\emph{central character} of $V_\alpha$. We say that it is
\emph{trivial} if $\chi_\alpha\equiv1$ on $Z(G)$. The conjugate
module $\bar V_\alpha$ has central character
$\overline{\chi_\alpha}=\chi_\alpha^{-1}$.

Every block component is $G$-equivariant. Therefore the
central characters satisfy the corresponding multiplicative
relations. For block indices $1\le i,j,k\le N$,
\begin{equation}\label{eq:central-character-relations}
\begin{aligned}
V_i\otimes V_j\to V_k
&\quad\Longrightarrow\quad
\chi_k=\chi_i\chi_j,\\
V_i\otimes\bar V_j\to V_k
&\quad\Longrightarrow\quad
\chi_k=\chi_i\chi_j^{-1},\\
V_i\otimes\bar V_j\to\bar V_k
&\quad\Longrightarrow\quad
\chi_k^{-1}=\chi_i\chi_j^{-1}.
\end{aligned}
\end{equation}
Indeed, if $\Phi:V_i\otimes V_j\to V_k$ is nonzero and
$z\in Z(G)$, then equivariance gives
$\chi_k(z)\Phi(x,y) = \Phi(\chi_i(z)x,\chi_j(z)y)$.
Choosing $x,y$ with $\Phi(x,y)\neq0$ implies
$\chi_k=\chi_i\chi_j$. The other two identities follow in the same
way.

We now consider all possible repetitions among the block indices.
For a pure component $V_i\otimes V_j\to V_k$, if $k=i$ or $k=j$,
then \eqref{eq:central-character-relations} shows that the central
character of the other input block is trivial. After excluding these
cases, the only possible repetition is $i=j$ with $k\neq i$, and
Lemma~\ref{lem:block-Ricci-relations} gives
$\rho_k=2\rho_i$. Notice that when $i=j=k$, the relation
$\chi_i=\chi_i^2$ also gives $\chi_i\equiv1$.

For a mixed component $V_i\otimes\bar V_j\to V_k$, the cases
$i=j$ and $k=i$ give a block with trivial central character. The only
remaining repeated-index case is $k=j$ with $i\neq j$, for which
Lemma~\ref{lem:block-Ricci-relations} gives
$\rho_i=2\rho_j$. Similarly, for
$V_i\otimes\bar V_j\to\bar V_k$, the cases $i=j$ and $k=j$
give a trivial central character, while the remaining possibility
$k=i$ with $i\neq j$ gives
$\rho_j=2\rho_i$.

Thus every block component with a repeated underlying block index either gives a block with trivial central character or gives a \emph{doubling relation} of the form $\rho_j=2\rho_i$ for some $i\neq j$.

\begin{lemma}\label{lem:trivial-central-character}
Let $V$ be an irreducible parallel holonomy block whose central
character is trivial on $Z(G)$. Then $c=0$.
\end{lemma}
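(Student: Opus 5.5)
The idea is to show that a trivial central character kills all curvature traces on $V$, and then to play this against the constant holomorphic sectional curvature on $V$. Assume $c\neq0$ throughout and aim for a contradiction.

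\emph{Step 1: $\rho_V\equiv0$.} Let $\mathfrak g$ be the Lie algebra of $G$. Since $G$ is compact and connected, $\mathfrak g=\mathfrak z\oplus[\mathfrak g,\mathfrak g]$, where $\mathfrak z$ is the Lie algebra of the identity component of $Z(G)$. Every element of $[\mathfrak g,\mathfrak g]$ acts on $V$ with trace zero, being a sum of commutators. Because $\chi_V\equiv1$, the subalgebra $\mathfrak z$ acts trivially on $V$. Hence $\tr_V A=0$ for every $A\in\mathfrak g$.

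By the Ambrose--Singer holonomy theorem, each curvature endomorphism $R(X,\bar Y)$ at $p$ lies in the holonomy algebra, which is contained in $\mathfrak g$. Therefore $\tr_V R(X,\bar Y)=0$ for all $X,Y$, so $\rho_V\equiv0$. In particular $r_{\alpha V}=0$ for all $\alpha$, so $d_V=0$, and \eqref{eq:cross-block-Ricci} gives $r_{V\beta}=2c$ for all $\beta\neq V$.

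\emph{Step 2: average $H$ over $V$.} Let $\{e_1,\dots,e_m\}$ be a unitary basis of $V$, where $m=n_V$. Sum \eqref{eq:HSC-polarization} over $i=j$, $k=\ell$ in $V$. This gives
$$
\sum_{i,k}\big(R_{i\bar ik\bar k}+R_{k\bar k i\bar i}+R_{k\bar i i\bar k}+R_{i\bar k k\bar i}\big)=2c\,m(m+1).
$$
The first two sums equal $2m^2d_V=0$. The last two are complex conjugates of each other, by the Hermitian symmetry $R_{i\bar jk\bar\ell}=\overline{R_{j\bar i\ell\bar k}}$. So it suffices to show that $\sum_{i,k}R_{k\bar i i\bar k}$ has vanishing real part; then $c=0$.

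\emph{Step 3: the mixed trace.} Define $\Psi:V\to V$ by $\Psi(x)=\sum_iR(x,\bar e_i)e_i$. This map is $\nabla$-parallel by Theorem~\ref{thm:partial-AS}, so by Lemma~\ref{lem:Schur} it equals $\lambda I_V$, and $m\lambda=\sum_{i,k}R_{k\bar ii\bar k}$. By Lemma~\ref{lem:curvature-difference},
$$
\Psi(x)=\Big(\sum_iR(e_i,\bar e_i)\Big)x+\sum_i\mathcal Q(x,\bar e_i,e_i).
$$
The first operator has $V$-trace $m\,\sum_i\rho_V(e_i,\bar e_i)=0$ by Step 1. Hence $m^2\lambda=\tr_V\big(x\mapsto\sum_i\mathcal Q(x,\bar e_i,e_i)\big)$, and the problem reduces to showing that this torsion-quadratic trace has zero real part.

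\emph{Step 4: the main obstacle.} I expect Step 3's reduction to be the hardest part, namely controlling the torsion trace. Expanding $\mathcal Q$ expresses the trace as a sum of squared norms and pairings of the block components of $T$ that have $V$ as an input or output. For each such nonzero component, Lemma~\ref{lem:block-Ricci-relations} combined with $\rho_V=0$ gives relation vectors. Examples are $\rho_W=\rho_E$ when $V$ is one input, and $\rho_E=\rho_F$ or $\rho_E=-\rho_F$ when $V$ is the output.

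I would feed these into the polarized quadratic identity \eqref{eq:quadratic-polarized}, pairing each with the relation vector $\varepsilon_V$. The aim is either to force those components to vanish (using $c\neq0$), or to show that their contributions cancel in the trace. The latter would use Schur-normalized partial traces $\Phi\Phi^*=\lambda I$ exactly as in the proof of Lemma~\ref{lem:block-Ricci-relations}.

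If this fails in general, a fallback is to restrict to the case where all torsion touching $V$ vanishes, so that $V$ is clean. Lemma~\ref{lem:clean-block} then gives $0=d_V=\frac{m+1}{2m}c$, hence $c=0$. This would reduce the lemma to proving that $V$ is clean, which is again the content of Step 4.
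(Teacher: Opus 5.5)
Your Steps 1--3 are correct, apart from a harmless slip: $\tr_V(\lambda I_V)=m\lambda$, not $m^2\lambda$. The observation $\rho_V\equiv0$ is a good one. But the argument stops exactly where the difficulty lies. You never show that $\operatorname{Re}\tr_V\bigl(x\mapsto\sum_i\mathcal Q(x,\bar e_i,e_i)\bigr)=0$, nor that $V$ is clean, and the tools you propose cannot do it.

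Since $\rho_V=0$, pairing a relation vector $b$ with $\varepsilon_V$ in \eqref{eq:quadratic-polarized} is the same as evaluating $\sum_\alpha b_\alpha\rho_\alpha=0$ on $V$, where $d_V=0$ and $r_{V\alpha}=2c$ for $\alpha\neq V$. The only output is $\sum_\alpha b_\alpha=b_V$. Many components satisfy this automatically: $V\otimes E\to W$ and $E\otimes\bar V\to W$ both give $\rho_W=\rho_E$, and $E\otimes\bar F\to V$ gives $\rho_E=\rho_F$. Worse, the self-components $V\otimes V\to V$, $V\otimes\bar V\to V$, $V\otimes\bar V\to\bar V$ and the components $W\otimes\bar W\to V$ yield no relation beyond $\rho_V=0$. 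So no Ricci bookkeeping can force them to vanish or to cancel in the $\mathcal Q$-trace. Yet the self-components are exactly what obstructs cleanness of $V$. And $W\otimes\bar W\to V$ is precisely the situation in which the lemma is invoked later, for instance to rule out $T(V_2,\bar V_2)\neq0$ in Lemma~\ref{lem:doubling-rank-collapse}. Without Kähler-type symmetry, the mixed trace $\sum_{i,k}R_{k\bar i i\bar k}$ is not controlled by $\rho_V$, so averaging $H$ over the unit sphere of $V$ throws away the information you need.

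The missing idea is to test $H\equiv c$ on one well-chosen vector instead of averaging. The paper proceeds as follows:
\begin{itemize}
\item $V$ stays irreducible under $G_s=[G,G]$, and lifts to the simply connected cover $\widetilde G_s=G_1\times\cdots\times G_r$.
\item Triviality on $Z(G)\supset Z(G_s)$ puts the highest weight of each tensor factor in the root lattice. Hence $V$ contains a unit zero-weight vector $v$.
\item For such $v$, $\langle Av,\bar v\rangle=0$ for every $A\in\mathfrak g^\C$. Cartan elements annihilate $v$, root vectors move $v$ into weight spaces orthogonal to the zero-weight space, and $\mathfrak z$ acts trivially.
\item By Ambrose--Singer (the same ingredient you used in Step 1), $R(v,\bar v)|_V\in\mathfrak g^\C$. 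Hence $H(v)=R_{v\bar v v\bar v}=0$, so $c=0$.
\item If $G_s$ is trivial, $V$ is a line on which holonomy acts trivially, and the conclusion is immediate.
\end{itemize}
This argument is pointwise and purely representation-theoretic, and it never touches the torsion. That is why it avoids the obstacle you ran into in Step 4.
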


\begin{proof}
We first consider the central and semisimple parts of the holonomy
representation separately. At this point, we use the fact that the central character is trivial on the whole center $Z(G)$.

Let $\mathfrak g$ denote the Lie algebra of $G$. Since $G$ is compact and connected, its Lie algebra admits the decomposition
$\mathfrak g=\mathfrak z\oplus\mathfrak g_s$,
where $\mathfrak z$ is the center of $\mathfrak g$ and
$\mathfrak g_s=[\mathfrak g,\mathfrak g]$ is compact semisimple.
Let $G_s=[G,G]$. Then $G=Z(G)^0G_s$, where $Z(G)^0$ is the identity component of the center $Z(G)$. For every $z\in Z(G)^0$, the action of $z$ on the irreducible $G$-module $V$ is given by a scalar. It follows that any complex subspace of $V$ invariant under $G_s$ is automatically invariant under $Z(G)^0$ as well. Since $G$ is generated by $Z(G)^0$ and $G_s$, such a subspace is in fact $G$-invariant. The irreducibility of $V$ as a $G$-module therefore shows that $V$ remains irreducible when restricted to $G_s$.

If $G_s$ is trivial, then the irreducibility of $V$ as a
$G_s$-module implies that $\dim_\C V=1$. Moreover, $G=Z(G)^0$, and the triviality of the central character implies that $G$ acts trivially on $V$. Hence the holonomy algebra acts trivially
on $V$. For any unit vector $v\in V$, we therefore have $R(v,\bar v)v=0$, and thus $H(v)=0$. Since $H\equiv c$, it follows that $c=0$.

We may therefore assume that $G_s$ is nontrivial.

Let $\pi:\widetilde G_s\longrightarrow G_s$
be the simply connected compact semisimple covering group of $G_s$. Since the central character of $V$ is trivial on $Z(G)$, every element of $Z(G)$ acts trivially on $V$. In particular, the action of $Z(G)^0$ is trivial, and differentiation shows that the center $\mathfrak z$ of $\mathfrak g$ acts trivially as well.

We next consider the center of $\widetilde G_s$. Since $\pi$ is a
group homomorphism, it maps $Z(\widetilde G_s)$ into $Z(G_s)$.
On the other hand, every element of $Z(G_s)$ commutes with $G_s$ and
also with $Z(G)^0$, since $Z(G)^0$ is central in $G$. Since
$G=Z(G)^0G_s$, it follows that $Z(G_s)\subset Z(G)$. Therefore every element of $Z(\widetilde G_s)$ acts trivially on $V$
through the lifted representation. Hence the lifted irreducible
representation of $\widetilde G_s$ also has trivial central character.

Write $\widetilde G_s=G_1\times\cdots\times G_r$ as a product of
simply connected compact simple groups. The irreducible representation $V$ is the external tensor product $V_1\boxtimes\cdots\boxtimes V_r$ of irreducible representations of the factors; its underlying vector space is $V_1\otimes\cdots\otimes V_r$. Since the center of $\widetilde G_s$ acts trivially on $V$, the center of each $G_a$ acts trivially on $V_a$. Equivalently, the highest weight of each $V_a$ lies
in the root lattice of $G_a$. By \cite[Remark 1]{LeFlochSmilga}, each $V_a$ has a nontrivial zero-weight space. Hence $V$ also has a nontrivial zero-weight space. Choose a nonzero zero-weight vector $v\in V$ and normalize it so that $|v|=1$.

Choose a maximal torus of $G_s$ with Lie algebra $\mathfrak t$, and set $\mathfrak h=\mathfrak t^\C\subset\mathfrak g_s^\C$. Then
\begin{equation*}
\mathfrak g_s^\C
=\mathfrak h\oplus\bigoplus_{\gamma\in\Delta}\mathfrak g_\gamma
\end{equation*}
is the corresponding root-space decomposition. Since $v$ has weight zero, $Hv=0$ for every $H\in\mathfrak h$. If $A_\gamma\in\mathfrak g_\gamma$, then $A_\gamma v$ is either zero or lies in the weight-$\gamma$ space. Since the representation is
unitary, distinct weight spaces are orthogonal. Hence
$\langle A_\gamma v,\bar v\rangle=0$.
It follows from the root-space decomposition that
$\langle Av,\bar v\rangle=0$
for every $A\in\mathfrak g_s^\C$. Moreover, $\mathfrak z$ acts
trivially on $V$, and therefore the same holds for
$A\in\mathfrak z^\C$. Thus
$$
\langle Av,\bar v\rangle=0
\qquad\text{for every }A\in\mathfrak g^\C.
$$
By the Ambrose--Singer holonomy theorem
\cite[Chapter II, Theorem 8.1]{KN}, the curvature endomorphisms at
the base point belong to the holonomy algebra. After complexification,
we have
$$
R(v,\bar v)|_V\in\mathfrak g^\C.
$$
Applying the previous identity with $A=R(v,\bar v)|_V$, we obtain
$$
R_{v\bar vv\bar v}
=\langle R(v,\bar v)v,\bar v\rangle=0.
$$
Since $|v|=1$, this gives $H(v)=0$. Since $H\equiv c$, we conclude
that $c=0$.
\end{proof}

We call a nonzero block component a \emph{doubling component} if
the relation given by Lemma~\ref{lem:block-Ricci-relations} is of the form
$$\rho_j=2\rho_i,\qquad i\neq j.$$
Such a relation arises, for example, from a pure component $\Lambda^2V_i\to V_j$, or from a mixed component $V_j\otimes\bar V_i\to V_i$.

For the next lemma, we introduce a notion of connectivity among the
holonomy blocks. The \emph{support} of a block component is the set of
underlying block indices occurring in its domain and target. Consider
the finite hypergraph whose vertices are the blocks $V_\alpha$ and whose
hyperedges are the supports of the nonzero block components. Two blocks are said to be
\emph{torsion-connected} if they belong to the same connected
component of this hypergraph. We refer to these connected components
as \emph{torsion-connected components}.

Under the standing assumption $c\neq0$, Lemma~\ref{lem:trivial-central-character} shows that a block component
giving a block with trivial central character cannot occur. Thus every
remaining nonzero block component leads either to a doubling
relation or to a relation involving three pairwise distinct blocks.

\begin{lemma}\label{lem:doubling-rank-collapse}
Assume $c\neq0$. Suppose that there is a doubling relation $\rho_2=2\rho_1$ within a torsion-connected component. Then all normalized block Ricci forms associated with the blocks in
that component are proportional.
\end{lemma}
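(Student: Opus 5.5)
The plan is to argue by propagation along the hypergraph. Let $\mathcal C$ be the torsion-connected component containing $V_1$ and $V_2$. Let $L\subset\R^N$ be the space of relation vectors. By Lemma~\ref{lem:quadratic}, $L$ is totally isotropic for the symmetric bilinear form
$$B(a,b)=c\Big(\sum_\alpha a_\alpha\Big)\Big(\sum_\alpha b_\alpha\Big)+\sum_\alpha(d_\alpha-c)a_\alpha b_\alpha .$$
I will show that the set $P=\{\alpha\in\mathcal C:\rho_\alpha\in\R\rho_1\}$ is all of $\mathcal C$. Note $P$ contains $1$ and $2$. Since $\mathcal C$ is connected, it suffices to show the following: if a nonzero block component has support meeting $P$, then its whole support lies in $P$.

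For supports with a repeated index this is immediate. With $c\neq0$, Lemma~\ref{lem:trivial-central-character} rules out trivial central characters, so the component gives a doubling relation $\rho_j=2\rho_i$. Knowing one of $\rho_i,\rho_j$ in $\R\rho_1$ gives the other.

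The remaining case is three pairwise distinct blocks. By Lemma~\ref{lem:block-Ricci-relations} the relation has the form $\rho_k=\rho_i\pm\rho_j$, with relation vector $b$ having $\sum b_\alpha\in\{1,0,-1\}$.
\begin{itemize}
\item If two of the three indices are already in $P$, the third follows linearly.
\item So the essential case is that exactly one index, say $i$, lies in $P$, with $\rho_i=t\rho_1$.
\end{itemize}

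In that case I would produce a second independent relation and make $B$ pin down the unknown diagonal values. Take the doubling vector $a=2\varepsilon_1-\varepsilon_2$ and the vectors $e_\alpha=\varepsilon_\alpha-\lambda_\alpha\varepsilon_1$ for $\alpha\in P$, all of which lie in $L$.
\begin{itemize}
\item $B(a,a)=0$ gives $d_2=-4d_1+4c$.
\item Applying \eqref{eq:quadratic} to $a$ and $e_\alpha$ gives $d_1$ and $d_\alpha$ as explicit multiples of $c$. I expect, for instance, $d_1$ to be forced by combining the doubling relation with Lemma~\ref{lem:block-scalar} evaluated on $V_1$ and $V_2$: restricting $\rho_2=2\rho_1$ to $V_1$ and $V_2$ and using $r_{12}+r_{21}=2c$ gives linear equations in $d_1,d_2,r_{12}$.
\item Pairing $b$ against $a$ and $e_i$ via \eqref{eq:quadratic-polarized} gives linear equations in $d_j-c$ and $d_k-c$.
\item $B(b,b)=0$ adds a quadratic one.
\end{itemize}

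The next goal is to show that $\rho_j-s\rho_1$ vanishes on every block $V_\gamma$ for a suitable $s$. I would do this by comparing the scalars $r_{\gamma j}$ with $r_{\gamma 1}$, using three inputs:
\begin{itemize}
\item the restricted relations $\sum_\alpha b_\alpha r_{\gamma\alpha}=0$ and $\sum_\alpha a_\alpha r_{\gamma\alpha}=0$;
\item the cross identities $r_{\gamma\alpha}+r_{\alpha\gamma}=2c$;
\item the values of $d_j$, $d_k$ obtained above.
\end{itemize}
The hope is that $c\neq0$ forces the $2\times2$ system for $(r_{\gamma j},r_{\gamma k})$ to be determined by $r_{\gamma 1}$. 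The sign $\sum b_\alpha=\pm1$ or $0$ is what I expect to distinguish the subcases.

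The main obstacle is this last step. Isotropy of $L$ alone only constrains the diagonal data $d_\alpha$, while proportionality concerns all the off-diagonal scalars $r_{\gamma\alpha}$. If the direct comparison fails, I would first try an indirect route: show that $B$ restricted to the coordinates of $\mathcal C$ has a kernel large enough to force $\dim\operatorname{span}\{\rho_\alpha\}_{\alpha\in\mathcal C}=1$. Concretely, I would show $\dim L\cap\R^{\mathcal C}\ge|\mathcal C|-1$ by using the doubling relation to exclude the degenerate configurations. To exclude them I would use the nondegeneracy coming from $c\neq0$, and if needed Lemma~\ref{lem:null-pair}.
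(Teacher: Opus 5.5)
Your propagation skeleton is fine as far as it goes: doubling relations, and three-block relations with two indices already in $P$, do propagate. But the proof hinges on the case you call essential, and there you have no argument. You concede that the comparison of the $r_{\gamma\alpha}$ may fail, and the fallback bound $\dim L\cap\R^{\mathcal C}\ge|\mathcal C|-1$ is a restatement of the conclusion, not a method.

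The inputs you list also do not determine the diagonal data you rely on. Initially $P=\{1,2\}$, and $e_2=-a$ adds nothing. Restricting $\rho_2=2\rho_1$ to $V_1$ and $V_2$ gives $r_{12}=2d_1$ and $d_2=2r_{21}$. Combined with $r_{12}+r_{21}=2c$, this yields only $4d_1+d_2=4c$, i.e.\ $B(a,a)=0$ again: one equation in two unknowns. (Also, a three-block relation vector always has coefficient sum $\pm1$, never $0$. That fact is exactly what the argument below exploits.)

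The missing ingredient is to pin down $d_2$ through cleanness of a sink, which takes four steps.
\begin{enumerate}
\item The directed doubling graph, with $i\to j$ when $\rho_j=2\rho_i$, is acyclic. Along a cycle of length $\ell$ one gets $\rho_{i_1}=2^{\ell}\rho_{i_1}$, so all $\rho$'s and $d$'s on the cycle vanish. This contradicts $4d_i+d_j=4c\neq0$ for an edge $i\to j$ on the cycle. Hence you may relabel a doubling edge $1\to2$ so that $2$ is a sink.
\item $V_2$ is then clean. A nonzero part of $T(V_2,V_2)$ either gives a doubling edge leaving $2$ or forces $\chi_2\equiv1$. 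A nonzero part of $T(V_2,\bar V_2)$ forces a trivial central character. The sink property and Lemma~\ref{lem:trivial-central-character} exclude all of these.
\item Lemma~\ref{lem:clean-block} now gives $d_2=\alpha c$ with $\alpha=\frac{n_2+1}{2n_2}\in(\frac12,1]$, hence $d_1=(1-\frac\alpha4)c$.
\item Pairing $a=2\varepsilon_1-\varepsilon_2$ with any relation vector $v$ in \eqref{eq:quadratic-polarized} gives $s(v)=\frac\alpha2 v_1+(\alpha-1)v_2$. Since $|s(v)|=1$ for a three-block vector, its support must contain both $1$ and $2$.
\end{enumerate}
So your essential case is vacuous rather than something to be solved.

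Your own pairing $B(a,b)=0$ already shows that the support of $b$ meets $\{1,2\}$, since otherwise it reads $c\,s(b)=0$. It cannot finish the job, though:
\begin{itemize}
\item if the support contains only $1$, it gives $d_1\in\{\frac c2,\frac{3c}2\}$, i.e.\ $d_2=\pm2c$;
\item if it contains only $2$, it gives $d_2\in\{0,2c\}$.
\end{itemize}
It is precisely the clean-sink value $d_2=\alpha c$ that rules these out. With that in place, your propagation completes the proof. The paper instead checks every three-block and doubling relation directly against the formula for $s(v)$.
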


\begin{proof}
We divide the proof into four steps.

\smallskip
\noindent\emph{Step 1: the directed doubling graph contains no directed cycle.}
Associate to each doubling relation $\rho_j=2\rho_i$ a directed edge
$i\to j$. Suppose that there is a directed cycle
$i_1\to i_2\to\cdots\to i_r\to i_1$.
Iterating the doubling relations along the cycle gives
$$
\rho_{i_2}=2\rho_{i_1},\qquad
\rho_{i_3}=2^2\rho_{i_1},\qquad\ldots,\qquad
\rho_{i_1}=2^r\rho_{i_1}.
$$
Since $2^r\neq1$, it follows that $\rho_{i_1}=0$, and hence
$\rho_{i_a}=0$ for every $1\le a\le r$.

For an edge $i\to j$ on the cycle, the corresponding relation vector
is $2\varepsilon_i-\varepsilon_j$, whose coefficient sum is $1$.
Lemma~\ref{lem:quadratic} therefore gives
$$c+4(d_i-c)+(d_j-c)=0,$$
that is,
$$4d_i+d_j=4c.$$
Since $\rho_i=\rho_j=0$, we have $d_i=d_j=0$, and hence $c=0$,
contrary to our assumption. Therefore the directed doubling graph is acyclic. As the graph is finite, each nonempty component contains a sink. We may therefore choose a doubling edge and relabel its endpoints so that
$$1\longrightarrow2,\qquad \rho_2=2\rho_1,$$
with $2$ a sink, namely, no doubling edge starts at $2$.

\smallskip
\noindent\emph{Step 2: the sink block $V_2$ is clean.}
We show that
$T(V_2,V_2)=T(V_2,\bar V_2)=0$.

Suppose first that $T(V_2,V_2)\neq0$. By the type property
\eqref{eq:torsion-type}, there is a nonzero block component of
the form
$$\Lambda^2V_2\longrightarrow V_k.$$
If $k\neq2$, Lemma~\ref{lem:block-Ricci-relations} gives
$\rho_k=2\rho_2$, and hence a doubling edge $2\to k$, contradicting the choice of $2$ as a sink. If $k=2$, the corresponding character relation is $\chi_2=\chi_2^2$, which implies $\chi_2\equiv1$. Lemma~\ref{lem:trivial-central-character} then gives $c=0$, again a contradiction. Therefore $T(V_2,V_2)=0$.

Suppose next that $T(V_2,\bar V_2)\neq0$. Then there is a nonzero
block component taking values in some $V_k$ or $\bar V_k$.
Since $\chi_2\chi_2^{-1}=1$, the character relations \eqref{eq:central-character-relations} imply $\chi_k\equiv1$ in either case. Lemma~\ref{lem:trivial-central-character} would again give $c=0$. Therefore $T(V_2,\bar V_2)=0$, and hence $V_2$ is clean.

By Lemma~\ref{lem:clean-block},
$$
d_2=\alpha c,
\qquad
\alpha=\frac{n_2+1}{2n_2}
=\frac12+\frac1{2n_2}\in\left(\frac12,1\right].
$$
The doubling relation $2\rho_1-\rho_2=0$ has relation vector
$u=2\varepsilon_1-\varepsilon_2$, namely $u=(2,-1,0,\ldots,0)\in\R^N$, whose coefficient sum is $1$.
Applying Lemma~\ref{lem:quadratic} to $u$ gives $c+4(d_1-c)+(d_2-c)=0$, or equivalently $4d_1+d_2=4c$. Since $d_2=\alpha c$, we obtain $$d_1=\left(1-\frac{\alpha}{4}\right)c.$$

\smallskip
\noindent\emph{Step 3: the third Ricci form in every three-block
relation is proportional to $\rho_1$.}
Let $v=(v_1,\ldots,v_N)$ be any relation vector arising from a
nonzero block component in the same torsion-connected component, and
set $s(v)=\sum_{i=1}^N v_i$. Applying
\eqref{eq:quadratic-polarized} to
$u=2\varepsilon_1-\varepsilon_2$ and $v$, and using
$u_1=2$, $u_2=-1$, and $\sum_i u_i=1$, we obtain
$$
c\,s(v)+2(d_1-c)v_1-(d_2-c)v_2=0.
$$
Since
$d_1-c=-\frac{\alpha}{4}c$ and
$d_2-c=(\alpha-1)c$, and since $c\neq0$, it follows that
\begin{equation}\label{eq:rank-collapse-equation}
s(v)=\frac{\alpha}{2}v_1+(\alpha-1)v_2.
\end{equation}
We first apply \eqref{eq:rank-collapse-equation} to a relation vector arising from a three-block component. By Lemma~\ref{lem:block-Ricci-relations}, such a vector has exactly three
nonzero coefficients, each equal to $\pm1$, and $s(v)=\pm1$. We
claim that its support contains both $1$ and $2$. If its support is disjoint from $\{1,2\}$, then the right-hand side of \eqref{eq:rank-collapse-equation} is zero. If it contains $1$ but not $2$, the absolute value of the right-hand side is
$\alpha/2\le1/2$. If it contains $2$ but not $1$, the absolute value is $|\alpha-1|<1/2$. Each case contradicts $|s(v)|=1$.

Thus the support of $v$ contains both $1$ and $2$. Let $k$ be the
third block index. The corresponding relation is of the form $$\pm\rho_1\pm\rho_2\pm\rho_k=0.$$
Since $\rho_2=2\rho_1$, we obtain
$$\rho_k\in\R\rho_1.$$
Therefore the normalized block Ricci form corresponding to the third block is also proportional to $\rho_1$.

\smallskip
\noindent\emph{Step 4: every additional doubling relation gives a Ricci form proportional to $\rho_1$.}
Let
$$v=2\varepsilon_p-\varepsilon_q,\qquad p\neq q,$$
be another doubling relation in the same torsion-connected component. Since $s(v)=1$, equation \eqref{eq:rank-collapse-equation} becomes
$$1=\frac{\alpha}{2}v_1+(\alpha-1)v_2.$$
We consider the possible positions of $p$ and $q$ relative to
$\{1,2\}$.

If $\{p,q\}\cap\{1,2\}=\varnothing$, then $v_1=v_2=0$, and the
right-hand side is zero. If $q=1$ and $p\neq2$, then
$(v_1,v_2)=(-1,0)$, and the right-hand side is $-\alpha/2$.
If $p=2$ and $q\neq1$, then $(v_1,v_2)=(0,2)$, and the
right-hand side is $2(\alpha-1)\le0$. If $q=2$ and $p\neq1$,
then $(v_1,v_2)=(0,-1)$, and the right-hand side is
$1-\alpha<1/2$. Finally, for the reverse relation $2\to1$, we have
$(v_1,v_2)=(-1,2)$, and the right-hand side is
$3\alpha/2-2\le-1/2$.
None of these values is equal to $1$.

Apart from the original relation $\rho_2=2\rho_1$, the only remaining
possibility is $p=1$ and $q\notin\{1,2\}$. In this case
$(v_1,v_2)=(2,0)$, so the above equation gives $\alpha=1$. The
doubling relation is
$$\rho_q=2\rho_1,$$
and therefore $\rho_q\in\R\rho_1$.

Combining Steps 3 and 4, every block $V_\alpha$ in this
torsion-connected component satisfies
$$\rho_\alpha\in\R\rho_1.$$
Indeed, every block in the component belongs to the support of some nonzero block component, and such a component gives either a three-block relation or a doubling relation. Hence all normalized block Ricci forms in the component are proportional. This completes the proof.
\end{proof}

We shall use the following elementary consequence twice. It follows from the
projective dimension theorem applied to the Segre variety; see
\cite[Chapter I, Theorem 7.2]{Hartshorne}.

\begin{lemma}\label{lem:Segre-dimension}
Let $\Phi:U\otimes W\to E$ be a complex linear map between nonzero
complex vector spaces. Suppose that
$\Phi(u\otimes w)\neq0$
whenever $u\neq0$ and $w\neq0$. Then
\begin{equation}\label{eq:Segre-dimension-estimate}
\dim_\C E\ge \dim_\C U+\dim_\C W-1.
\end{equation}
\end{lemma}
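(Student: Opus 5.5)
We will prove Lemma~\ref{lem:Segre-dimension} by projectivizing the map and applying the projective dimension theorem to the Segre variety. Put $m=\dim_\C U$, $k=\dim_\C W$ and $e=\dim_\C E$. The Segre embedding
$$\sigma:\mathbb P(U)\times\mathbb P(W)\longrightarrow\mathbb P(U\otimes W),\qquad ([u],[w])\mapsto[u\otimes w],$$
has as its image the Segre variety $\Sigma$. This is an irreducible projective variety of dimension $(m-1)+(k-1)=m+k-2$, and its points are exactly the classes of nonzero decomposable tensors.

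Next, let $K=\ker\Phi\subset U\otimes W$. If $K=0$, then $\Phi$ is injective and $e\ge mk\ge m+k-1$, since $(m-1)(k-1)\ge0$. So we may assume $K\neq0$. Then $\mathbb P(K)$ is a linear subspace of $\mathbb P(U\otimes W)$ of dimension $\dim K-1$. By hypothesis $\Phi(u\otimes w)\neq0$ for all nonzero $u,w$, so no nonzero decomposable tensor lies in $K$. Hence
$$\Sigma\cap\mathbb P(K)=\varnothing.$$

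Now we apply the projective dimension theorem \cite[Chapter I, Theorem 7.2]{Hartshorne}. Two projective varieties $Y,Z\subset\mathbb P^{N}$ with $\dim Y+\dim Z\ge N$ must intersect. Here $N=mk-1$. Since the intersection above is empty, we must have
$$(m+k-2)+(\dim K-1)<mk-1,$$
that is, $\dim K\le mk-m-k+1$. Rank--nullity then gives
$$e\ge\dim\operatorname{im}\Phi=mk-\dim K\ge m+k-1,$$
which is \eqref{eq:Segre-dimension-estimate}.

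The argument is routine. The only points that need care are quoting the dimension $m+k-2$ and the irreducibility of the Segre variety, and handling separately the degenerate case $K=0$, where $\mathbb P(K)$ is empty.
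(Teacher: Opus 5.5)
Your proposal is correct and follows the paper's own proof: you treat the injective case separately, then observe that $\mathbb P(\ker\Phi)$ misses the Segre variety and apply the projective dimension theorem from Hartshorne. The only cosmetic difference is that you finish with rank--nullity on $\dim\ker\Phi$, whereas the paper uses the lower bound $\dim\mathbb P(\ker\Phi)\ge \dim_\C U\,\dim_\C W-\dim_\C E-1$.
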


\begin{proof}
If $\ker\Phi=0$, then
$\dim_\C E\ge\dim_\C(U\otimes W)$,
and the conclusion is immediate. Suppose that $\ker\Phi\neq0$.
The assumption on $\Phi$ implies that
$$
\mathbb P(\ker\Phi)\cap
\big(\mathbb P(U)\times\mathbb P(W)\big)=\varnothing,
$$
where $\mathbb P(U)\times\mathbb P(W)$ is regarded as the Segre
variety in $\mathbb P(U\otimes W)$. By the projective dimension
theorem \cite[Chapter I, Theorem 7.2]{Hartshorne},
$$
\dim\mathbb P(\ker\Phi)
+\dim\big(\mathbb P(U)\times\mathbb P(W)\big)
<
\dim\mathbb P(U\otimes W).
$$
Since
$$\dim\mathbb P(\ker\Phi) \ge \dim_\C U\,\dim_\C W-\dim_\C E-1$$
and
$$
\dim\big(\mathbb P(U)\times\mathbb P(W)\big)
=\dim_\C U+\dim_\C W-2,
$$
we obtain
$\dim_\C E>\dim_\C U+\dim_\C W-2$.
Since the dimensions are integers,
\eqref{eq:Segre-dimension-estimate} follows.
\end{proof}
We now show that the proportionality obtained in Lemma~\ref{lem:doubling-rank-collapse} is incompatible with $c\neq0$.

\begin{proposition}\label{prop:doubling-exclusion}
Under the standing assumption $c\neq0$, no torsion-connected component can contain a doubling relation.
\end{proposition}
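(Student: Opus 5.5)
The plan is to derive a contradiction from the rigid numerical picture produced by Lemma~\ref{lem:doubling-rank-collapse}, and then to close it with Lemma~\ref{lem:null-pair}. Suppose a torsion-connected component contains a doubling relation. As in the proof of Lemma~\ref{lem:doubling-rank-collapse}, choose the edge $1\to2$ with $2$ a sink. Then $V_2$ is clean, $d_2=\alpha c$ and $d_1=(1-\alpha/4)c$, and every block $V_k$ in the component has $\rho_k=\lambda_k\rho_1$. Restricting these proportionalities to blocks gives $r_{\gamma k}=\lambda_k r_{\gamma1}$. Combined with \eqref{eq:cross-block-Ricci} this yields, for distinct $k,\ell$ in the component,
$$
\lambda_\ell\, r_{k1}+\lambda_k\, r_{\ell 1}=2c .
$$
Together with $d_k=\lambda_k r_{k1}$, this is a finite system in the unknowns $r_{k1}$. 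I would first extract from it: $r_{21}=\alpha c/2$, $r_{12}=2d_1$, and for each third block $k$ of a three-block relation through $1$ and $2$ (Step 3 of Lemma~\ref{lem:doubling-rank-collapse}) the value $\lambda_k\in\{1,3\}$, since $\pm\rho_1\pm2\rho_1\pm\lambda_k\rho_1=0$. Applying \eqref{eq:quadratic} to these relation vectors then pins down $d_k$.

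The second ingredient is the null pair. Since $c\neq0$, Lemma~\ref{lem:trivial-central-character} and the character relations \eqref{eq:central-character-relations} kill every mixed component $V_i\otimes\bar V_i\to V_k$ or $\bar V_k$. Hence $T(x,\bar x)=0$ for every $x$ in any block, and in particular $T(V_1,\bar V_1)=0$. Also $T(V_2,\bar V_2)=0$ by cleanness. So by Lemma~\ref{lem:null-pair} it suffices to find nonzero $x\in V_1$ and $y\in V_2$ with
$$
T(x,y)=T(x,\bar y)=T(y,\bar x)=0 ,
$$
since that forces $c=0$. The maps $(x,y)\mapsto T(x,y)$, $T(x,\bar y)$, $T(y,\bar x)$ are sums of block components supported on $\{1,2,k\}$. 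Each nonzero one gives a relation vector that, by Step 3, must involve a third block $k$ with $\rho_k\in\{\rho_1,3\rho_1\}$ (or be a doubling relation, which Step 4 restricts to $\rho_q=2\rho_1$ with $\alpha=1$). Note that in this case a second input block $q$ is also a sink, so that $n_2=1$.

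I would use Lemma~\ref{lem:Segre-dimension} to bound dimensions. If a doubling component of the form $V_2\otimes\bar V_1\to V_1$ is nonvanishing on all decomposable tensors, the lemma gives $n_1\ge n_1+n_2-1$, hence $n_2=1$, $\alpha=1$ and $d_2=c$. Similarly, a three-block component $V_1\otimes V_2\to V_k$ that is nondegenerate on decomposables forces $n_k\ge n_1+n_2-1$. I expect to combine these rank bounds with the trace identities of Lemma~\ref{lem:block-Ricci-relations}. Here the partial traces $\tr_F(\Phi^*\Phi)$ encode $n$'s, and together with the linear system above they should show that the three maps cannot all be nondegenerate on $\mathbb P(V_1)\times\mathbb P(V_2)$ simultaneously. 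Then a projective dimension count on the Segre variety, intersecting the three degeneracy loci, produces the null pair $(x,y)$.

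The main obstacle is this last step. Producing a single $(x,y)$ annihilated by all three torsion maps at once is not a direct consequence of Lemma~\ref{lem:Segre-dimension}, which handles one map at a time. I would first try to show, from the numerical system, that at most one of the three maps is nonzero; with $\alpha=1$ and $n_2=1$ the system becomes very restrictive. If that fails, the fallback is to evaluate \eqref{eq:HSC-polarization} directly on $x,y$, using Proposition~\ref{prop:curvature-reconstruction} to express $R_{x\bar xy\bar y}$ through $\mathcal Q$. The goal would then be to get a sign contradiction with $d_1=(1-\alpha/4)c$ and $r_{21}=\alpha c/2$.
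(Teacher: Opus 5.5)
Your proposal has a genuine gap, and it is exactly the step you flag. Your argument is meant to close by producing a null pair $(x,y)\in V_1\times V_2$ for the doubling edge. In the decisive configurations no such pair exists, so no degeneracy-locus count on the Segre variety can supply one.

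\textbf{Why the null pair cannot exist.} The doubling relation comes either from $\Lambda^2V_1\to V_2$ or from a mixed component $\Psi:V_2\otimes\bar V_1\to V_1$, and $\Psi$ contributes to $T(y,\bar x)$. Applied to $V_1\times V_2$, Lemma~\ref{lem:null-pair} can at best show that $\Psi\neq0$ and has no decomposable zeros. Lemma~\ref{lem:Segre-dimension} then gives $n_2=1$. So $V_2\otimes\bar V_1$ is irreducible and $\Psi$ is injective, which means $T(y,\bar x)\neq0$ for all nonzero $x\in V_1$, $y\in V_2$. Your expectation that the three maps cannot all be nondegenerate is therefore beside the point: one nonzero nondegenerate map already blocks the null pair. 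This configuration is compatible with all the equivariance constraints. For example, take $G=U(2)$, $V_1=\C^2$, $V_2=\Lambda^2\C^2$: here $\Lambda^2V_1\cong V_2$, $V_2\otimes\bar V_1\cong V_1$, and the central characters are nontrivial.

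\textbf{Why the fallback also fails.} Evaluating \eqref{eq:HSC-polarization} on $(x,y)$ does not rescue the case where $\Lambda^2V_1\to V_2=\C\xi$ is present. The Bianchi identity on $(e_i,\bar\xi,\xi)$ and $(e_i,\bar e_i,\xi)$ only fixes $R_{\xi\bar\xi i\bar i}=|\nu|^2=\frac{c}{2}$ and $R_{i\bar i\xi\bar\xi}=-\mu\bar\nu=\frac32 c$. These agree exactly with your $r_{21}=\frac{\alpha}{2}c$ and $r_{12}=2d_1=\frac32c$ at $\alpha=1$. So neither pair-level curvature on $V_1\times V_2$ nor the block Ricci system yields a contradiction there.

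\textbf{What is missing.} You need a curvature computation inside $V_1$, and, in larger components, a different placement of the null pair. The paper splits by the number of blocks. It writes $\rho_i=k_i\rho_1$ and $\rho_1|_{V_i}=s_ig_i$, giving $k_js_i+k_is_j=2c$ (your linear system). It adds two facts you do not use: every $k_i\neq0$, and a block of maximal $|k_i|$ is clean, so Lemma~\ref{lem:clean-block} forces $d_i/c\in(\frac12,1]$ on it.

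\textbf{Three or more blocks.} The relation $(k_\ell-k_k)(k_j-k_i)=0$ or the explicit three-block solution eliminates everything except two blocks with $\rho_i=\rho_j=2\rho_1$. The null pair is taken between \emph{those} two blocks, not between $V_1$ and $V_2$. A pure component between them would output $4\rho_1$ and a mixed one would output $0$, and neither is available.

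\textbf{Two blocks.} Here the paper drops null pairs altogether.
\begin{itemize}
\item If the doubling is purely mixed, Bianchi on $(x,\bar x,\xi)$ gives $R(x,\bar x)\xi=0$. Hence $\rho_1(\xi,\bar\xi)=2c$, which contradicts $\rho_2(\xi,\bar\xi)=H(\xi)=c$ and $\rho_2=2\rho_1$. This is essentially your fallback, and it works in this subcase.
\item If $\Lambda^2V_1\to V_2$ is present, put the invariant form $\Omega$ in symplectic normal form. Bianchi on $(e_i,\bar e_j,e_j)$ gives $A_{ij}+A_{ji}=c-\frac32c|\Omega_{ij}|^2$ for $A_{ij}=R_{i\bar i j\bar j}$. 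Summing gives $\sum_{i,j}A_{ij}=\frac{m(2m-1)}{4}c$, while $\rho_2=2\rho_1$ gives $\frac{3m^2}{4}c$. Hence $(m+1)c=0$.
\end{itemize}

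A minor slip: a third block in a relation through $V_1,V_2$ can also have $\lambda_k=-1$, for example from $\rho_1=\rho_2+\rho_k$. So the possible values are $\{-1,1,3\}$, not $\{1,3\}$.
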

\begin{proof}
Assume that $c\neq0$. After relabeling, write the blocks in the relevant
torsion-connected component as $V_1,\ldots,V_m$, with
$\rho_2=2\rho_1$.
We first claim that $\rho_1\neq0$. Indeed, if $\rho_1=0$, then
$\rho_2=0$, and hence $d_1=d_2=0$. On the other hand, applying Lemma~\ref{lem:quadratic} to the relation
vector $2\varepsilon_1-\varepsilon_2$ gives $4d_1+d_2=4c$,
which contradicts $c\neq0$.

By Lemma~\ref{lem:doubling-rank-collapse}, all normalized block Ricci
forms in this component are proportional to $\rho_1$. Since
$\rho_1\neq0$, for each $1\le i\le m$ there is a unique
$k_i\in\R$ such that
$$
\rho_i=k_i\rho_1.
$$
In particular, $k_1=1$ and $k_2=2$. Let $\rho=\rho_1$ and
$g_i=g|_{V_i}$. By Lemma~\ref{lem:block-scalar}, there is a real
number $s_i$ such that $\rho|_{V_i}=s_i g_i$. It follows that
$$
r_{ij}=k_js_i,\qquad r_{ji}=k_is_j,\qquad d_i=k_is_i.
$$
Therefore the cross-block identity
$r_{ij}+r_{ji}=2c$ becomes
\begin{equation}\label{eq:rank-one-pair}
k_js_i+k_is_j=2c,\qquad i\neq j.
\end{equation}

We next observe that $k_i\neq0$ for every $i$. Indeed, if $k_i=0$,
then necessarily $i\notin\{1,2\}$. Applying
\eqref{eq:rank-one-pair} to the pairs $(i,1)$ and $(i,2)$ gives
$$
s_i=2c,\qquad 2s_i=2c,
$$
respectively, which is impossible since $c\neq0$.

We also note that any block $V_i$ for which $|k_i|$ is maximal is
clean. Suppose first that there is a nonzero block component
$\Lambda^2V_i\to V_j$. Lemma~\ref{lem:block-Ricci-relations} gives
$\rho_j=2\rho_i$, and hence $k_j=2k_i$. If $j\neq i$, this
contradicts the maximality of $|k_i|$, while $j=i$ would give
$k_i=0$. Similarly, a nonzero block component of
$T(V_i,\bar V_i)$ would give a block Ricci form equal to zero,
contradicting $k_j\neq0$ for every block in the component.
Therefore $T(V_i,V_i)=T(V_i,\bar V_i)=0$, so $V_i$ is clean.

We now consider the possible number of blocks in the component.

\smallskip
\noindent\emph{Case 1: the component contains at least four blocks.}
Assume $m\ge4$. Choose three distinct indices $i,j,k$.
Equation \eqref{eq:rank-one-pair} gives
$$
k_js_i+k_is_j=2c,\qquad
k_ks_i+k_is_k=2c,\qquad
k_ks_j+k_js_k=2c.
$$
Since all $k_i$ are nonzero, the first two equations give
$$
s_j=\frac{2c-k_js_i}{k_i},
\qquad
s_k=\frac{2c-k_ks_i}{k_i}.
$$
Substituting these into the third equation gives
$$
2c(k_j+k_k)-2k_jk_ks_i=2ck_i,
$$
and hence
\begin{equation}\label{eq:si-triple}
s_i=c\,\frac{k_j+k_k-k_i}{k_jk_k}.
\end{equation}

Let $\ell$ be a fourth index. Applying
\eqref{eq:si-triple} to the triples $(i,j,k)$ and $(i,j,\ell)$
and comparing the two expressions for $s_i$, we obtain
\begin{equation}\label{eq:four-k-relation}
(k_\ell-k_k)(k_j-k_i)=0
\end{equation}
whenever $i,j,k,\ell$ are pairwise distinct.

Taking $i=1$ and $j=2$, and using $k_1=1$ and $k_2=2$, we see that
$k_3=\cdots=k_m$. Write $k_3=\cdots=k_m=r$.
Applying \eqref{eq:four-k-relation} with
$(i,j,k,\ell)=(3,1,2,4)$ gives
$$
(r-2)(1-r)=0.
$$
Thus $r=1$ or $r=2$.

Suppose first that $r=1$. Since $m\ge4$, there are at least three
blocks with coefficient $1$. For any two such blocks,
\eqref{eq:rank-one-pair} gives $s_i+s_j=2c$. Comparing the equations
for three coefficient-one blocks shows that their $s$-values are all
equal to $c$. Pairing any one of these blocks with $V_2$ then gives
$s_2=0$, and hence $d_2=k_2s_2=0$.

Since $|k_2|=2$ is maximal, $V_2$ is clean. Lemma~\ref{lem:clean-block} therefore gives
$$
\frac{d_2}{c}
=\frac{n_2+1}{2n_2}
\in\left(\frac12,1\right],
$$
contradicting $d_2=0$.

Suppose next that $r=2$. Then $k_2=\cdots=k_m=2$.
For distinct $i,j\ge2$, equation \eqref{eq:rank-one-pair} gives
$s_i+s_j=c$. Since there are at least three indices in
$\{2,\ldots,m\}$, it follows that $s_i=c/2$ for every $i\ge2$.
Consequently, $d_i=k_is_i=c$ for all $i\ge2$. These blocks have maximal $|k_i|$ and hence are
clean. Lemma~\ref{lem:clean-block} now gives
$$
1=\frac{d_i}{c}
=\frac{n_i+1}{2n_i},
$$
so $n_i=1$ for every $i\ge2$.

Choose two distinct blocks $V_i$ and $V_j$ with $i,j\ge2$, and let
$x\in V_i$ and $y\in V_j$ be unit vectors. Since both blocks are
clean, their self-torsion terms vanish. Moreover, a nonzero pure
component between $V_i$ and $V_j$ would, by Lemma~\ref{lem:block-Ricci-relations}, give an output block with normalized
Ricci form
$\rho_i+\rho_j=4\rho_1$.
This is impossible, since the proportionality coefficients $k_\alpha$ occurring in this torsion-connected component are only $1$ and $2$. Likewise, a nonzero mixed component
between these two blocks would give an output block Ricci form equal
to
$\rho_i-\rho_j=0$
or
$\rho_j-\rho_i=0$,
contradicting the nonvanishing of all $k_\alpha$. Hence all the
torsion terms required in Lemma~\ref{lem:null-pair} vanish. That
lemma gives $c=0$, again a contradiction.

\smallskip
\noindent\emph{Case 2: the component contains exactly three blocks.}
Write $(k_1,k_2,k_3)=(1,2,t)$. Since $V_3$ belongs to the same torsion-connected component as $V_1$ and $V_2$, there is a nonzero block component whose support contains $V_3$ and at least one of $V_1,V_2$. By Lemma~\ref{lem:trivial-central-character}, a relation giving a block with trivial central character cannot occur when $c\neq0$. Thus the remaining possibilities are a three-block relation or a doubling relation.

If the relation involves all three blocks, Lemma~\ref{lem:block-Ricci-relations} gives one of the sum or difference relations among $\rho_1,\rho_2,\rho_3$. Since
$\rho_2=2\rho_1$ and $\rho_3=t\rho_1$, this gives
$$t=3,\qquad t=1,\qquad\text{or}\qquad t=-1.$$
If instead the relation is a doubling relation involving $V_3$ and
one of $V_1,V_2$, then
$$t=2,\qquad t=\frac12,\qquad t=4,\qquad\text{or}\qquad t=1.$$
Consequently,
$$t\in\left\{-1,\frac12,1,2,3,4\right\}.$$
The three equations in \eqref{eq:rank-one-pair} are
$$2s_1+s_2=2c,\qquad ts_1+s_3=2c,\qquad ts_2+2s_3=2c.$$
Solving this system gives
$$
s_1=\frac{t+1}{2t}c,\qquad
s_2=\frac{t-1}{t}c,\qquad
s_3=\frac{3-t}{2}c.
$$
Since $d_i=k_is_i$, we obtain
\begin{equation}\label{eq:three-k-diagonal}
d_1=\frac{t+1}{2t}c,\qquad
d_2=\frac{2(t-1)}{t}c,\qquad
d_3=\frac{t(3-t)}{2}c.
\end{equation}

We now consider the possible values of $t$. If $t=-1$, then
$|k_2|$ is maximal and \eqref{eq:three-k-diagonal} gives
$d_2/c=4$. If $t=1/2$, then $|k_2|$ is again maximal and
$d_2/c=-2$. If $t=1$, then $|k_2|$ is maximal and $d_2/c=0$.
If $t=3$, then $|k_3|$ is maximal and $d_3/c=0$. Finally, if
$t=4$, then $|k_3|$ is maximal and $d_3/c=-2$. In each case the
corresponding block is clean, whereas Lemma~\ref{lem:clean-block}
gives
$$\frac{d_i}{c} =\frac{n_i+1}{2n_i} \in\left(\frac12,1\right].$$
Hence none of these five values of $t$ can occur.

It remains to consider $t=2$. In this case $|k_2|=|k_3|=2$ is
maximal and
$$d_2=d_3=c.$$
Thus both $V_2$ and $V_3$ are clean. By Lemma~\ref{lem:clean-block},
$1=\frac{d_i}{c} =\frac{n_i+1}{2n_i}, \qquad i=2,3$,
and hence $n_2=n_3=1$.

Choose unit vectors $x\in V_2$ and $y\in V_3$. Since both blocks
are clean, their self-torsion terms vanish. A nonzero pure block component involving $V_2$ and $V_3$ would give, by Lemma~\ref{lem:block-Ricci-relations}, an output block with normalized Ricci form
$$\rho_2+\rho_3=4\rho_1.$$
No such block occurs in this torsion-connected component. Likewise, a nonzero mixed component involving $V_2$ and $V_3$ would give an output block Ricci form equal to
$\rho_2-\rho_3=0$ or $\rho_3-\rho_2=0$, contradicting the fact that all $k_i$ are nonzero. Hence the torsion terms appearing in Lemma~\ref{lem:null-pair} vanish for $x$ and $y$. That lemma gives $c=0$, a contradiction. Thus Case 2 cannot occur.

\smallskip
\noindent\emph{Case 3: the component contains exactly two blocks.}
Write the two blocks as $A=V_1$ and $B=V_2$, so that $\rho_B=2\rho_A$. We consider two subcases depending on whether there is a nonzero pure component $\Lambda^2A\longrightarrow B$.

\smallskip
\noindent\emph{Case 3a: there is a nonzero pure component
$\Lambda^2A\to B$.}
Under the assumption $c\neq0$, the central-character argument at the beginning of this section shows that all same-block mixed components vanish. Moreover, a nonzero pure component $A\otimes B\to V$ would give, by Lemma~\ref{lem:block-Ricci-relations},
$$\rho_V=\rho_A+\rho_B=3\rho_A.$$
Since the torsion-connected component contains only the two blocks
$A$ and $B$, this is impossible. Thus, up to skew-symmetry and
complex conjugation, the only possible additional mixed component
compatible with $\rho_B=2\rho_A$ is
$$\Psi:B\otimes\bar A\longrightarrow A,$$
since in this case Lemma~\ref{lem:block-Ricci-relations} gives
$$\rho_A=\rho_B-\rho_A.$$

We claim that $\Psi$ is nonzero. Otherwise, for unit vectors
$x\in A$ and $\xi\in B$, all the torsion terms appearing in Lemma~\ref{lem:null-pair} would vanish: the same-block mixed terms vanish by the preceding discussion, the pure component $A\otimes B$ is zero, and the remaining mixed component is absent. Lemma~\ref{lem:null-pair} would then give $c=0$, a contradiction.

In fact, $\Psi$ has no nonzero decomposable element in its kernel.
Suppose that $\Psi(b,\bar a)=0$ for some nonzero $a\in A$ and $b\in B$. By complex conjugation and skew-symmetry, the corresponding component $T(a,\bar b)$ also vanishes. Together with the vanishing of the same-block mixed components and of the pure component $A\otimes B$, this makes $(a,b)$ a null pair. Lemma~\ref{lem:null-pair} again gives $c=0$, a contradiction.

Let $m=\dim_\C A$ and
$q=\dim_\C B$. Applying Lemma~\ref{lem:Segre-dimension} to $\Psi:B\otimes\bar A\to A$ gives $m\ge q+m-1$. Hence $q=1$. Choose a unit vector $\xi$ spanning $B$, so that $B=\C\xi$.

The nonzero pure component $\Lambda^2A\to B$ can now be written as
$T(x,y)=\mu\,\Omega(x,y)\xi, \qquad x,y\in A$, where $\mu\neq0$ and $\Omega\in\Lambda^2A^*$ is a nonzero alternating form. Its radical
$$
\operatorname{Rad}(\Omega)
=
\{x\in A:\Omega(x,y)=0\text{ for every }y\in A\}
$$
is holonomy invariant. Since $A$ is irreducible and $\Omega\neq0$, we have $\operatorname{Rad}(\Omega)=0$. Thus $\Omega$ is nondegenerate, and in particular $m$ is even.

Using the Hermitian metric to identify $A^*$ with $\bar A$, the pure component defines a holonomy-equivariant isomorphism
$$F:A\longrightarrow B\otimes\bar A.$$
The map $\Psi:B\otimes\bar A\to A$ is also holonomy equivariant.
Since $B$ is one-dimensional and $\Psi$ has no nonzero decomposable element in its kernel, $\Psi$ is injective and hence an isomorphism. Consequently,
$$\Psi\circ F:A\longrightarrow A$$
is a nonzero holonomy-equivariant endomorphism. By Lemma~\ref{lem:Schur}, it is a nonzero scalar multiple of the identity. Thus $\Psi$ is a scalar multiple of $F^{-1}$.

Since $F$ is holonomy equivariant, the positive Hermitian
endomorphism $F^*F$ commutes with the holonomy action on $A$.
Lemma~\ref{lem:Schur} therefore gives $F^*F=\lambda I_A$
for some $\lambda>0$. After rescaling $\Omega$, we may assume that
the matrix of $\Omega$ is unitary. Since $\Omega$ is also
skew-symmetric and nondegenerate, there is a unitary basis
$\{e_1,\ldots,e_m\}$ of $A$ in which
$$
\Omega=
\begin{pmatrix}
0&1&&\\
-1&0&&\\
&&\ddots&\\
&&&\begin{matrix}0&1\\-1&0\end{matrix}
\end{pmatrix}.
$$
In particular,
$$
\bar\Omega=\Omega,\qquad
\Omega^t=-\Omega,\qquad
\Omega\Omega^*=I,\qquad
\Omega^2=-I.
$$
After absorbing the remaining nonzero scalar factors into $\mu$ and
$\nu$, the block components needed below take the form
\begin{equation*}
T(e_i,e_j)=\mu\Omega_{ij}\xi,
\qquad
T(e_i,\bar\xi)
=\nu\sum_{p=1}^m\Omega_{ip}\bar e_p,
\qquad 1\le i,j\le m.
\end{equation*}
By complex conjugation and skew-symmetry,
$$
T(\bar e_i,\xi)
=\bar\nu\sum_{p=1}^m\bar\Omega_{ip}e_p,
\qquad
T(\xi,\bar e_i)
=-\bar\nu\sum_{p=1}^m\bar\Omega_{ip}e_p.
$$
All same-block mixed components and all pure components involving
both $A$ and $B$ vanish by the preceding discussion.

We now apply the Bianchi identity \eqref{eq:first-Bianchi-general}.
First take $(e_i,\bar\xi,\xi)$. Since $\nabla T=0$, the derivative
terms vanish. After projecting to $A$, the only curvature term is
$R(\bar\xi,\xi)e_i=-R(\xi,\bar\xi)e_i$.
On the torsion side, the only nonzero term is
$$
\begin{aligned}
T(T(e_i,\bar\xi),\xi)
&=
|\nu|^2\sum_{p,q}
\Omega_{ip}\bar\Omega_{pq}e_q=-|\nu|^2e_i,
\end{aligned}
$$
where we used $\Omega\bar\Omega=\Omega^2=-I$. Hence
$R(\xi,\bar\xi)e_i=|\nu|^2e_i$,
and therefore
\begin{equation}\label{eq:xi-A-curvature}
R_{\xi\bar\xi i\bar i}=|\nu|^2.
\end{equation}
Averaging over $i$ gives
$\rho_A(\xi,\bar\xi)=|\nu|^2$.
Since $B$ is a complex line and $|\xi|=1$,
$\rho_B(\xi,\bar\xi) = R_{\xi\bar\xi\xi\bar\xi} =H(\xi)=c$.
The relation $\rho_B=2\rho_A$ therefore gives
\begin{equation}\label{eq:c-nu}
c=2|\nu|^2.
\end{equation}

Next apply the Bianchi identity to $(e_i,\bar e_i,\xi)$ and project
to $B=\C\xi$. The only curvature term in $B$ is
$R(e_i,\bar e_i)\xi$. Since
$T(e_i,\bar e_i)=0$
and
$T(\xi,e_i)=0$,
while
$T(\bar e_i,\xi) = \bar\nu\sum_{p=1}^m\bar\Omega_{ip}e_p$,
we obtain
$$
\begin{aligned}
T(T(\bar e_i,\xi),e_i)
&=
\mu\bar\nu
\sum_p\bar\Omega_{ip}\Omega_{pi}\xi=-\mu\bar\nu\,\xi.
\end{aligned}
$$
Thus
\begin{equation*}
R_{i\bar i\xi\bar\xi}=-\mu\bar\nu.
\end{equation*}
Since $e_i$ and $\xi$ lie in distinct parallel blocks, the two mixed
output terms in \eqref{eq:HSC-polarization} vanish. Hence
$R_{i\bar i\xi\bar\xi} + R_{\xi\bar\xi i\bar i} =2c$.
Combining this identity with
\eqref{eq:xi-A-curvature} and \eqref{eq:c-nu}, we obtain
\begin{equation}\label{eq:symplectic-basic}
R_{\xi\bar\xi i\bar i}=|\nu|^2,
\qquad
c=2|\nu|^2,
\qquad
-\mu\bar\nu=\frac32c.
\end{equation}

For $1\le i,j\le m$, set
$A_{ij}=R_{i\bar i j\bar j}$.
For $i\neq j$, Lemma~\ref{lem:curvature-difference}, applied to
$(e_i,\bar e_j,e_j)$ and paired with $\bar e_i$, gives
$$
R_{i\bar j j\bar i}-A_{ji}
=
\langle\mathcal Q(e_i,\bar e_j,e_j),\bar e_i\rangle.
$$
The first two terms in $\mathcal Q(e_i,\bar e_j,e_j)$ vanish because
$T(A,\bar A)=0$. For the remaining term,
$$
T(e_j,e_i)=-\mu\Omega_{ij}\xi,
\qquad
T(\xi,\bar e_j)
=-\bar\nu\sum_p\bar\Omega_{jp}e_p.
$$
Therefore
$$
\langle
T(T(e_j,e_i),\bar e_j),\bar e_i
\rangle
=
-\mu\bar\nu|\Omega_{ij}|^2.
$$
Using \eqref{eq:symplectic-basic}, we obtain
$R_{i\bar j j\bar i}-A_{ji} = \frac32c|\Omega_{ij}|^2$.
Interchanging $i$ and $j$ gives
$R_{j\bar i i\bar j}-A_{ij} = \frac32c|\Omega_{ij}|^2$.
Applying \eqref{eq:HSC-polarization} to the orthonormal pair
$e_i,e_j$, we have
$A_{ij} +R_{j\bar i i\bar j} +R_{i\bar j j\bar i} +A_{ji} =2c$.
Substitution gives
\begin{equation}\label{eq:symplectic-pair}
A_{ij}+A_{ji}
=
c-\frac32c|\Omega_{ij}|^2.
\end{equation}

Since $A_{ii}=H(e_i)=c$,
$$\sum_{i,j}A_{ij} = mc+\sum_{i<j}(A_{ij}+A_{ji}).$$
Moreover, $\Omega$ is unitary and skew-symmetric, so
$$\sum_{i,j}|\Omega_{ij}|^2=m, \qquad \Omega_{ii}=0,$$
and therefore
$\sum_{i<j}|\Omega_{ij}|^2=m/2$.
Using \eqref{eq:symplectic-pair}, we obtain
$$\sum_{i,j}A_{ij}=mc+\binom m2c-\frac32c\frac m2
=\frac{m(2m-1)}4c.$$
We now compute the same sum using $\rho_B=2\rho_A$. Since $B$ is a complex line, \eqref{eq:symplectic-basic} gives $\rho_B(e_i,\bar e_i)=R_{i\bar i\xi\bar\xi}=\frac32c$, whereas $\rho_A(e_i,\bar e_i)=\frac1m\sum_jA_{ij}$. Hence $\frac32c=\frac2m\sum_jA_{ij}$, and therefore $\sum_jA_{ij}=\frac{3m}{4}c$. Summing over $i$ gives
$\sum_{i,j}A_{ij}=\frac{3m^2}{4}c$. Comparing the two expressions
for $\sum_{i,j}A_{ij}$ yields $\frac{m(2m-1)}4c=\frac{3m^2}{4}c$. Since $m>0$, this gives $(m+1)c=0$, contradicting $c\neq0$.

\smallskip
\noindent\emph{Case 3b: there is no nonzero pure component
$\Lambda^2A\to B$.}
In this case the doubling relation must arise from a nonzero mixed
component
$$B\otimes\bar A\longrightarrow A.$$
We first show that both $A$ and $B$ are clean. For $T(A,A)$, a
component with values in $B$ is absent by the assumption of Case 3b, while a component with values in $A$ would give $\rho_A=2\rho_A$, and hence $\rho_A=0$, which is impossible.
Similarly, a component of $T(B,B)$ with values in $A$ would give
$\rho_A=2\rho_B=4\rho_A$, again impossible since $\rho_A\neq0$, while a component with values in $B$ would give $\rho_B=2\rho_B$. Finally, every same-block mixed component would give a block with zero normalized Ricci form, which cannot occur since all coefficients $k_i$ are nonzero. Therefore
$$T(A,A)=T(A,\bar A)=T(B,B)=T(B,\bar B)=0,$$
and both $A$ and $B$ are clean.

Let
$m=\dim_\C A$
and
$q=\dim_\C B$.
By Lemma~\ref{lem:clean-block},
$$d_A=\frac{m+1}{2m}c, \qquad d_B=\frac{q+1}{2q}c.$$
Applying Lemma~\ref{lem:quadratic} to the relation $2\rho_A-\rho_B=0$ gives $4d_A+d_B=4c$. Substituting the above expressions for $d_A$ and $d_B$, we obtain $$4\frac{m+1}{2m}+\frac{q+1}{2q}=4,$$
or equivalently,
\begin{equation*}
\frac4m+\frac1q=3.
\end{equation*}
Since $m$ and $q$ are positive integers, $m=1$ is impossible. If
$m\ge3$, then
$$q=\frac{m}{3m-4}<1,$$
which is also impossible. Hence
$$m=2,\qquad q=1.$$
Choose a unit vector $\xi$ spanning the complex line $B$.

Let $x\in A$ be a unit vector. Apply the first Bianchi identity
\eqref{eq:first-Bianchi-general} to $(x,\bar x,\xi)$ and project to
$B$. Since $\nabla T=0$, all derivative terms vanish. Moreover,
$T(x,\bar x)=0$ because $A$ is clean, and
$T(\xi,x)=0$
because a nonzero pure component involving $A$ and $B$ would give,
by Lemma~\ref{lem:block-Ricci-relations}, a block Ricci form
$$\rho_A+\rho_B=3\rho_A,$$
which is not present in this two-block component. The remaining
quadratic torsion term contains a factor in $T(A,A)$ and therefore
also vanishes. Thus the $B$-component of the torsion side is zero.

Since curvature preserves the parallel blocks, among the curvature
terms only $R(x,\bar x)\xi$ has a $B$-component. Hence
$R(x,\bar x)\xi=0$,
and therefore
\begin{equation}\label{eq:two-block-cross-zero}
R_{x\bar x\xi\bar\xi}=0.
\end{equation}

Applying the polarization identity \eqref{eq:HSC-polarization} to
the orthonormal pair $x,\xi$, the two off-block terms vanish by block
preservation. Thus
$R_{x\bar x\xi\bar\xi} + R_{\xi\bar\xi x\bar x} =2c$.
By \eqref{eq:two-block-cross-zero},
$R_{\xi\bar\xi x\bar x}=2c$.
Since this holds for every unit vector $x\in A$, averaging over a
unitary basis of $A$ gives
$\rho_A(\xi,\bar\xi)=2c$.
On the other hand, $B$ is a complex line, so
$\rho_B(\xi,\bar\xi) = R_{\xi\bar\xi\xi\bar\xi} = H(\xi) = c$.
The relation $\rho_B=2\rho_A$ now gives
$c=4c$,
contradicting $c\neq0$.

Cases 1--3 cover all possibilities, and each leads to a contradiction. Hence no torsion-connected component can contain a
doubling relation when $c\neq0$. This proves the proposition.
\end{proof}

\section{Torsion components involving three distinct blocks}\label{sec:three-block}

Throughout this section, we continue to assume that
$\nabla T=0$ and $H\equiv c\neq0$. By the case analysis in Section~\ref{sec:repeated-components}, every nonzero block component with a
repeated underlying block index either forces a block with trivial
central character or produces a doubling relation. The first
possibility is impossible by Lemma~\ref{lem:trivial-central-character}, while the second is impossible
by Proposition~\ref{prop:doubling-exclusion}. Hence every nonzero
block component involves three pairwise distinct underlying block
indices.

In particular, every block in the torsion-connected component under
consideration is clean, since a nonzero block component of $T(V_i,V_i)$ or
$T(V_i,\bar V_i)$ would have a repeated underlying block index.

We shall use Lemmas~\ref{lem:block-Ricci-relations},~\ref{lem:quadratic},~\ref{lem:clean-block},~\ref{lem:null-pair}, and~\ref{lem:Segre-dimension}.
It remains to show that block components involving three pairwise distinct underlying block indices cannot occur.

\begin{proposition}\label{prop:three-block}
Under the standing assumptions of this section, no nonzero block
component of the complexified torsion involving three pairwise
distinct underlying block indices can occur.
\end{proposition}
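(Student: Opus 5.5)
Assume a nonzero block component involving three pairwise distinct blocks exists, and fix one torsion-connected component containing it, with blocks $V_1,\dots,V_m$ ($m\ge3$). By the discussion above every block there is clean, so Lemma~\ref{lem:clean-block} gives $d_\alpha=\frac{n_\alpha+1}{2n_\alpha}c$, i.e.\ $(d_\alpha-c)/c=-\frac{n_\alpha-1}{2n_\alpha}=:-\theta_\alpha$ with $\theta_\alpha\in[0,\tfrac12)$. Every relation vector coming from a nonzero component has three entries $\pm1$ with coefficient sum $s=\pm1$ (the three shapes $\varepsilon_i+\varepsilon_j-\varepsilon_k$, $\varepsilon_i-\varepsilon_j-\varepsilon_k$, $-\varepsilon_i+\varepsilon_j-\varepsilon_k$ from Lemma~\ref{lem:block-Ricci-relations}). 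Dividing \eqref{eq:quadratic} by $c\neq0$ gives, for such a vector supported on $\{i,j,k\}$,
$$1=\theta_i+\theta_j+\theta_k .$$
This is the basic numerical constraint. Since each $\theta<\tfrac12$, at least two of the three blocks have rank $\ge2$. It does not yet give a contradiction, so I will also use the polarized identity \eqref{eq:quadratic-polarized} between two different components: for relation vectors $a,b$ with $s(a)s(b)=\pm1$ we get $s(a)s(b)=\sum_\alpha\theta_\alpha a_\alpha b_\alpha$. If the supports of $a$ and $b$ were disjoint, the right side would be $0$, which is impossible. So any two nonzero three-block components share a block, and comparing signs on the shared indices restricts the combinatorics heavily.

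The second ingredient is pointwise, in the spirit of Lemma~\ref{lem:null-pair} and Case~3 of Proposition~\ref{prop:doubling-exclusion}. Take a component $\Phi$, say $V_i\otimes V_j\to V_k$ or $V_i\otimes\bar V_j\to V_k$. For unit $x\in V_i$ and $y\in V_j$, apply the first Bianchi identity to $(x,y,\bar y)$ and $(y,x,\bar x)$ and project to $V_i$, respectively $V_j$. Since the blocks are clean, only the terms $T(T(x,\bar y),y)$ and similar ones survive. This expresses $R_{y\bar y x\bar x}$ and $R_{x\bar x y\bar y}$ through norms like $|T(x,\bar y)|^2$ and $|T(x,y)|^2$, with definite signs. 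Adding them and using \eqref{eq:HSC-polarization} (the off-block terms vanish) yields an identity
$$2c=\epsilon_1|T(x,y)|^2+\epsilon_2|T(x,\bar y)|^2+\epsilon_3|T(y,\bar x)|^2 ,$$
with fixed signs $\epsilon_a$. Averaging over unitary bases and using the trace identities $\Phi\Phi^*=\lambda I$ and $\tr_F(\Phi^*\Phi)=\frac{\lambda m_W}{m_E}I$ from the proof of Lemma~\ref{lem:block-Ricci-relations} turns these into relations among $c$, the constants $\lambda$, and the ranks. The contributions of all components between $V_i$ and $V_j$ add up, so cross-term bookkeeping is needed.

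Next I need to know which pairs admit no component. If for some pair $(x,y)$ all the relevant torsion terms vanish, Lemma~\ref{lem:null-pair} gives $c=0$. Hence every pair of blocks in the component must be linked, and every decomposable $x\otimes y$ must be detected by some component. Lemma~\ref{lem:Segre-dimension} then gives lower bounds such as $n_k\ge n_i+n_j-1$. I would combine these bounds with $\theta_i+\theta_j+\theta_k=1$, which in terms of ranks reads
$$\frac1{n_i}+\frac1{n_j}+\frac1{n_k}=1 .$$
The solutions are only $(3,3,3)$, $(2,4,4)$ and $(2,3,6)$ up to order. The Segre bound eliminates $(3,3,3)$ ($5>3$) and $(2,4,4)$ ($5>4$). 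For $(2,3,6)$ the bound $6\ge 2+3-1$ holds, but the other pairs fail, e.g.\ $n_j\ge n_i+n_k-1$ with inputs $2,6$ would need $7\le 3$. This works provided each pair is detected by a single component with the third block as target. That is the main obstacle: a pair may be covered jointly by several components landing in different blocks, or via conjugate outputs. I would handle this by applying Lemma~\ref{lem:Segre-dimension} to the direct sum of all components with domain $V_i\otimes V_j$ or $V_i\otimes\bar V_j$, and then using the pairwise support-intersection constraint from the polarized identity to bound the number and ranks of the targets. Whatever configurations survive, I would kill with the averaged Bianchi/curvature identity above, comparing against $\rho_k=\rho_i\pm\rho_j$ evaluated on unit vectors, exactly as in Case~3b.
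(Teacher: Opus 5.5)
\textbf{There is a gap, and it sits at the step you yourself flag as the main obstacle.} Your numerical ingredients are right: $\theta_i+\theta_j+\theta_k=1$, i.e.\ $\frac1{n_i}+\frac1{n_j}+\frac1{n_k}=1$, and Lemma~\ref{lem:Segre-dimension} finishes the argument once each pair of blocks is controlled by a single component landing in the third block. But you leave that step open.

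\textbf{The missing idea.} It is a sharper reading of the polarized identity you already wrote down. In $s(a)s(b)=\sum_{\alpha\in\operatorname{supp}a\cap\operatorname{supp}b}\theta_\alpha a_\alpha b_\alpha$, the left side has absolute value $1$. Each $\theta_\alpha<\frac12$, so the right side has absolute value $<1$ as soon as the two supports share at most two indices. The argument then runs as follows.
\begin{enumerate}
\item All nonzero components in the torsion-connected component have the \emph{same} support $\{A,B,C\}$, not merely overlapping ones.
\item Two different relations on that support would add or subtract to $2\rho_A=0$, $2\rho_B=0$ or $2\rho_C=0$. This is impossible, since $\rho_A|_A=d_Ag$ with $d_A=\frac{n_A+1}{2n_A}c\neq0$. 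So a single relation holds, say $\rho_C=\rho_A+\rho_B$.
\item By Lemma~\ref{lem:block-Ricci-relations}, the only compatible components, up to conjugation and skew-symmetry, are $P:A\otimes B\to C$, $Q:C\otimes\bar B\to A$ and $S:C\otimes\bar A\to B$.
\item For each pair of blocks, exactly one of these carries all the torsion terms entering Lemma~\ref{lem:null-pair}. Hence each of them has no nonzero decomposable zero.
\item Lemma~\ref{lem:Segre-dimension} gives $r\ge a+b-1$, $a\ge r+b-1$ and $b\ge r+a-1$. Summing yields $a+b+r\le3$, so all ranks are $1$. This contradicts $\sum 1/n=1$; equivalently, the quadratic identity for $(1,1,-1)$ reduces to $c=0$.
\end{enumerate}
No enumeration of $(3,3,3)$, $(2,4,4)$, $(2,3,6)$ is needed.

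\textbf{Your proposed workarounds do not close the gap.}
\begin{itemize}
\item Lemma~\ref{lem:Segre-dimension} concerns one complex-linear map on a single tensor product $U\otimes W$. The null-pair condition mixes $T(x,y)$, which is linear in $y$, with $T(x,\bar y)$ and $T(y,\bar x)$, which are antilinear in $y$. So there is no single ``direct sum'' map on one tensor product to which the lemma applies.
\item The claimed identity $2c=\epsilon_1|T(x,y)|^2+\epsilon_2|T(x,\bar y)|^2+\epsilon_3|T(y,\bar x)|^2$ with fixed signs is not available for a general Hermitian connection. A term such as $\langle T(T(x,y),\bar y),\bar x\rangle$ pairs the component $V_i\otimes V_j\to V_k$ with the component $V_k\otimes\bar V_j\to V_i$. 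The latter is an independent tensor, not the adjoint of the former, because the torsion is not totally skew-symmetric as it is for $\nabla^b$. Compare the cross term $\mu\bar\nu$, which is not a norm, in Case~3a of Proposition~\ref{prop:doubling-exclusion}.
\end{itemize}
As written, the proposal therefore does not prove the statement.
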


\begin{proof}
Suppose that $T\neq0$, and fix a
torsion-connected component containing a nonzero block
component. We proceed in five steps.

\smallskip
\noindent\emph{Step 1: distinct nonzero block-component relations have the same support.}

For every clean block $V_i$, Lemma~\ref{lem:clean-block} gives
$d_i=\frac{n_i+1}{2n_i}c$.
Hence
$$
d_i-c=-\lambda_i c,
\qquad
\lambda_i=\frac{n_i-1}{2n_i}
\in\left[0,\frac12\right).
$$
By Lemma~\ref{lem:block-Ricci-relations}, every nonzero block
component involving three pairwise distinct blocks gives a relation
vector with exactly three nonzero coefficients, each equal to
$\pm1$, and with coefficient sum $\pm1$.

Let $a=(a_i)$ and $b=(b_i)$ be two such relation vectors. Applying
the polarized identity \eqref{eq:quadratic-polarized} to $a$ and
$b$, we obtain
$$
c\left(\sum_i a_i\right)\left(\sum_i b_i\right)
+\sum_i(d_i-c)a_ib_i=0.
$$
Using $d_i-c=-\lambda_i c$ and dividing by $c\neq0$, we get
\begin{equation}\label{eq:edge-support}
\left(\sum_i a_i\right)\left(\sum_i b_i\right)
=
\sum_{i\in\operatorname{supp}a\cap\operatorname{supp}b}
\lambda_i a_ib_i.
\end{equation}
The absolute value of the left-hand side is $1$. If
$\operatorname{supp}a\neq\operatorname{supp}b$, then, since both
supports contain exactly three indices, their intersection contains
at most two indices. Therefore
$$
\left|
\sum_{i\in\operatorname{supp}a\cap\operatorname{supp}b}
\lambda_i a_ib_i
\right|
\le
\sum_{i\in\operatorname{supp}a\cap\operatorname{supp}b}\lambda_i
<1,
$$
contradicting \eqref{eq:edge-support}. Thus all nonzero block
components in the chosen torsion-connected component have the same
three support blocks. Denote them by $A,B,C$.

\smallskip
\noindent\emph{Step 2: all nonzero block components give the same Ricci relation.}

Since each relation vector has three nonzero coefficients equal to
$\pm1$ and coefficient sum $\pm1$, up to multiplication by $-1$ the
only possible relations supported on $A,B,C$ are
$$
\rho_A+\rho_B-\rho_C=0,\qquad
\rho_A-\rho_B+\rho_C=0,\qquad
-\rho_A+\rho_B+\rho_C=0.
$$
Suppose that two different relations occur. Adding or subtracting
the corresponding equations gives $2\rho_A=0$, $2\rho_B=0$, or
$2\rho_C=0$. This is impossible. Indeed, for every clean block $V_i$, Lemma~\ref{lem:clean-block} gives
$$\rho_i|_{V_i}=d_i g_i, \qquad d_i=\frac{n_i+1}{2n_i}c\neq0,$$
so $\rho_i\neq0$.

Hence all nonzero block components in the chosen
torsion-connected component give the same Ricci relation. After
relabeling $A,B,C$ if necessary, we may assume
\begin{equation}\label{eq:ABC-relation}
\rho_C=\rho_A+\rho_B.
\end{equation}

\smallskip
\noindent\emph{Step 3: the three compatible block components are
nonzero and have no decomposable zeros.}
We first determine the block components compatible with
\eqref{eq:ABC-relation}. By Lemma~\ref{lem:block-Ricci-relations}, up to skew-symmetry and complex
conjugation, the only possibilities are
$$
P:A\otimes B\longrightarrow C,\qquad
Q:C\otimes\bar B\longrightarrow A,\qquad
S:C\otimes\bar A\longrightarrow B.
$$
The corresponding Ricci relations are
$$
\rho_C=\rho_A+\rho_B,\qquad
\rho_A=\rho_C-\rho_B,\qquad
\rho_B=\rho_C-\rho_A,
$$
respectively, and all three are equivalent to
\eqref{eq:ABC-relation}. Any other nonzero block component
involving these three blocks would give one of the other Ricci
relations considered in Step 2.

We first show that $P$ is nonzero. Suppose that $P=0$, and choose
nonzero vectors $x\in A$ and $y\in B$. Since both blocks are clean,
$$T(x,\bar x)=T(y,\bar y)=0.$$
There is no mixed component between $A$ and $B$ compatible with
\eqref{eq:ABC-relation}, and hence
$$T(x,\bar y)=T(y,\bar x)=0.$$
Moreover, $T(x,y)=P(x,y)=0$. Thus $(x,y)$ is a null pair in the sense of Lemma~\ref{lem:null-pair}, which gives $c=0$, a contradiction. Therefore $P\neq0$.

The same argument gives a stronger conclusion. If
$P(x,y)=0$
for some nonzero $x\in A$ and $y\in B$, then all the other torsion
terms appearing in Lemma~\ref{lem:null-pair} vanish as above. Hence that lemma again gives $c=0$. Therefore $P(x,y)\neq0$ whenever $x\neq0$ and $y\neq0$.

The same argument applies to the pair $(C,B)$. Up to skew-symmetry
and complex conjugation, the only mixed component compatible with
\eqref{eq:ABC-relation} is $Q:C\otimes\bar B\to A$. If $Q$ vanished on a nonzero decomposable tensor, the corresponding
vectors would form a null pair. Thus $Q$ is nonzero and has no
nonzero decomposable element in its kernel. Similarly, $S:C\otimes\bar A\to B$ is nonzero and has no nonzero decomposable element in its kernel.

\smallskip
\noindent\emph{Step 4: the Segre dimension estimate gives
$a=b=r=1$.}
Let
$$a=\dim_\C A,\qquad b=\dim_\C B,\qquad r=\dim_\C C.$$
Applying Lemma~\ref{lem:Segre-dimension} to $P$, $Q$, and $S$,
respectively, and noting that complex conjugation does not change
dimension, we obtain
$$r\ge a+b-1,\qquad a\ge r+b-1,\qquad b\ge r+a-1.$$
Adding these three inequalities gives $a+b+r\ge2(a+b+r)-3$, and hence $a+b+r\le3$. Since $a,b,r$ are positive integers, it follows that $a=b=r=1$.

\smallskip
\noindent\emph{Step 5: the line-block case contradicts the quadratic identity.}
Since $A$, $B$, and $C$ are clean complex lines, Lemma~\ref{lem:clean-block} gives
$$d_A=d_B=d_C=c.$$
With respect to the ordered triple $(A,B,C)$, relation
\eqref{eq:ABC-relation} has relation vector $(1,1,-1)$, whose coefficient sum is $1$. Applying Lemma~\ref{lem:quadratic},
we obtain
$$c+(d_A-c)+(d_B-c)+(d_C-c)=0.$$
Since $d_A=d_B=d_C=c$, this reduces to $c=0$, contradicting the standing assumption $c\neq0$.

Thus no nonzero block component involving three pairwise distinct
underlying block indices can occur. This proves the proposition.
\end{proof}

\section{Proof of the main theorem and applications}

We now combine the analysis of block components with a repeated underlying block index in Section~\ref{sec:repeated-components} with Proposition~\ref{prop:three-block} to complete the proof of the main theorem.

\begin{proof}[Proof of Theorem~\ref{thm:main}]
Suppose that $T_p\neq0$ at some point $p\in M$. With respect to the
irreducible holonomy decomposition
\eqref{eq:holonomy-decomposition-point}, choose a nonzero block
component of the complexified torsion at $p$.

If an underlying block index is repeated, the case analysis in
Section~\ref{sec:repeated-components} shows that the component either forces a block with trivial central character or produces a doubling relation. The first possibility is incompatible
with Lemma~\ref{lem:trivial-central-character} when $c\neq0$, while the second is impossible by Proposition~\ref{prop:doubling-exclusion}. If instead the three underlying block indices are pairwise distinct, the component is one of those treated in Proposition~\ref{prop:three-block}, and hence cannot occur when $c\neq0$.

It follows that every block component of the complexified
torsion at $p$ vanishes, contradicting $T_p\neq0$. Therefore,
$T=0$ on $M$, and hence $g$ is K\"ahler. Moreover, $H\equiv c$ is now the K\"ahler holomorphic sectional curvature of $g$. Therefore, $g$ is locally a complex space form.
\end{proof}

\begin{proof}[Proof of Corollary~\ref{cor:Chern}]
Let $\nabla=\nabla^c$ be the Chern connection. Its curvature is of
type $(1,1)$. Since $\nabla^cT^c=0$, Theorem~\ref{thm:partial-AS} gives $\nabla^cR^c=0$. Thus $\nabla^c$ is an
Ambrose--Singer connection.

If $c\neq0$, Theorem~\ref{thm:main} gives $T^c=0$; consequently,
$g$ is K\"ahler and locally a complex space form.

Now assume that $c=0$. On each connected component of $M$, the
Ni--Zheng structure theorem for complete locally Chern homogeneous Hermitian manifolds \cite{NiZhengCAS} applies, since both the Chern torsion and curvature are $\nabla^c$-parallel. The universal cover therefore splits into a Chern-flat complex Lie group and Hermitian symmetric factors. Since $H^c\equiv0$, each Hermitian symmetric factor has vanishing curvature by the K\"ahler polarization identity. Hence all factors are Chern flat, and consequently $R^c=0$ on $M$.
\end{proof}

\begin{proof}[Proof of Corollary~\ref{cor:BAS}]
Let $\nabla=\nabla^b$. By hypothesis, $\nabla^bT^b=0$. The curvature characterization of Bismut torsion-parallel metrics proved by Zhao and Zheng shows that $R^b$ is of type $(1,1)$; see \cite{ZhaoZBTP}. Theorem~\ref{thm:partial-AS} therefore gives
$\nabla^bR^b=0$, so $\nabla^b$ is an Ambrose--Singer connection.
If the Bismut holomorphic sectional curvature is a nonzero constant, Theorem~\ref{thm:main} gives $T^b=0$. Consequently, $g$ is K\"ahler and locally a complex space form.
\end{proof}

\noindent\textbf{Generative AI disclosure.}
During the development of this work, the authors used ChatGPT
5.6 Sol to assist with exploratory computations, possible proof directions, and editorial polishing. The authors checked the mathematical proofs and computations, and take full responsibility for the contents of the paper.

\vspace{0.3cm}

\noindent\textbf{Declaration of competing interests.}
The authors declare that they have no competing interests.

\end{document}